\documentclass[11pt]{amsart}
\usepackage{graphicx}
\usepackage{amssymb}
\usepackage{amsmath}
\usepackage{amsthm}
\usepackage{amscd}
\usepackage{amsfonts,epsfig,latexsym}
\usepackage{todonotes}
\def\R{\mathbb{R}}
\def\N{\mathbb{N}}

\def\Z{\mathbb{Z}}

\def\l{\lambda}
\def\pwt{\phi_{\omega,\tau}}

\newtheorem{Proposition}{Proposition}[section]
\newtheorem{Remark}{Remark}
\newtheorem{Lemma}{Lemma}[section]

\newtheorem{Theorem}{Theorem}[section]
\newtheorem*{Theorem*}{Theorem}
\newtheorem*{Conjecture*}{Conjecture}
\newtheorem{Corollary}{Corollary}[section]

\begin{document}

\title{On Sums of Nearly Affine Cantor Sets}

\author{{A.~Gorodetski}}
\address{Anton~Gorodetski
\newline\hphantom{iii} University of California, Irvine}
\email{asgor@math.uci.edu}
\thanks{A.\ G.\  and S.\ N.\ were supported in part by NSF grants  DMS--1301515.}

\author{{S. Northrup}}
\address{Scott Northrup
\newline\hphantom{iii} University of California, Irvine}
\email{snorthru@math.uci.edu}

\subjclass[2010]{Primary: 28A80, 37D99, 28A78, }

\keywords{Cantor sets, arithmetic sums, Hausdorff Dimension}

\begin{abstract}
For a compact set $K\subset \mathbb{R}^1$ and a family $\{C_\lambda\}_{\lambda\in J}$ of dynamically defined Cantor sets sufficiently close to affine with $\text{dim}_H\, K+\text{dim}_H\, C_\lambda>1$ for all $\lambda\in J$, under natural technical conditions we prove that the sum $K+C_\lambda$ has positive Lebesgue measure for almost all values of the parameter $\lambda$. As a corollary, we show that generically the sum of two affine Cantor sets has positive Lebesgue measure provided the sum of their Hausdorff dimensions is greater than one.
\end{abstract}

\maketitle

\section{Introduction and Main results}\label{sec:intro}
Questions on the structure and properties of sums of Cantor sets appear naturally in dynamical systems \cite{n1, n2, n3, PaTa}, number theory \cite{CF, Mal, Moreira}, harmonic analysis \cite{BM, BKMP}, and spectral theory \cite{EL06, EL07, EL08, Y}. J.\,Palis asked whether it is true (at least generically) that the arithmetic sum of dynamically defined Cantor sets either has measure zero, or contains an interval (see \cite{PaTa}). This claim is currently known as the ``Palis' Conjecture''. The conjecture was answered affirmatively in \cite{MY} for generic dynamically defined Cantor sets. For sums of generic {\it affine} Cantor sets Palis' Conjecture is still open.

Even for the simplest case of middle-$\alpha$ Cantor sets these questions are non-trivial and not completely settled. By a middle-$\alpha$ Cantor set we mean the Cantor set
\begin{equation}\label{e.ddcs}
C=\cap_{n=0}^{\infty}I_n,  \ I_{n+1}=\cup_{i=1}^{m}\varphi_i(I_n), \ \varphi_i(I_0)\cap \varphi_j(I_0)=\emptyset \ \text{for}\ i\ne j,
\end{equation}
where $I_0=[0,1]$, $m=2$, $\varphi_1(x)=ax$, $\varphi_2(x)=(1-a)+ax$, $a=\frac{1}{2}(1-\alpha)$. Let us denote this Cantor set by $C_a$.

It is easy to show (using dimensional arguments, e.g. see Proposition 1 in Section 4 from \cite{PaTa}) that if $\frac{\log 2}{\log 1/a}+\frac{\log 2}{\log 1/b}<1$ then $C_a+C_b$ is a Cantor set. On the other hand, Newhouse's Gap Lemma (e.g. see Section 4.2 from \cite{PaTa}, or \cite{n1}) implies that if $\frac{a}{1-2a}\frac{b}{1-2b}>1$ then $C_a+C_b$ is an interval. This still leaves a ``mysterious region'' $R$ in the space of parameters, see Figure \ref{f.1}, and Solomyak \cite{So97} showed that for Lebesgue a.e. $(a, b)\in R$ one has $Leb(C_a+C_b)>0$.
 \begin{figure}[h]\label{f.1}
\begin{center}
\includegraphics[height=2in]{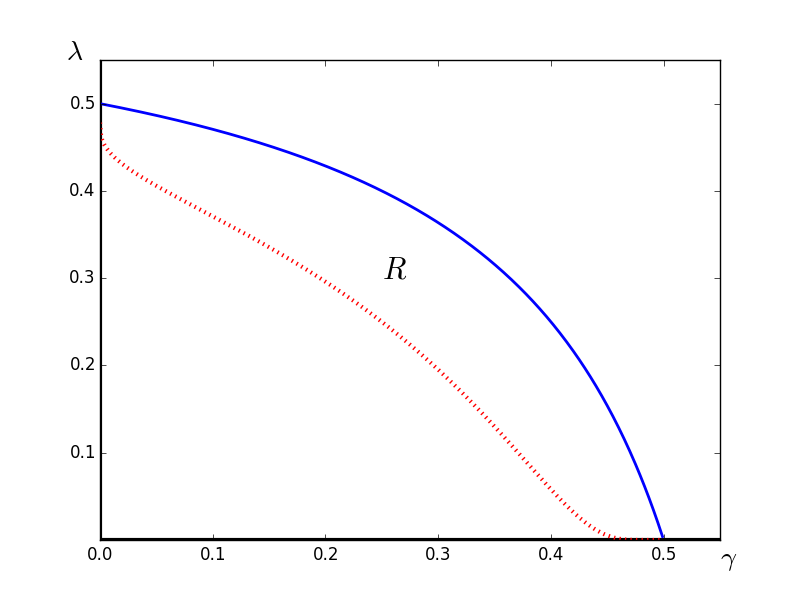} \caption{The region $R$ studied by B.\,Solomyak in \cite{So97}}
\end{center}
\end{figure}
 A description of possible topological types of $C_a+C_b$ was provided in \cite{MO}. It is still an open question whether $C_a+C_b$ contains an interval for a.e. $(a,b)\in R$.

 Solomyak's result was generalized to families of homogeneous self-similar Cantor sets (i.e. Cantor sets given by (\ref{e.ddcs}) where all contractions $\{\varphi_i\}_{i=1, \ldots, m}$ are linear with the same contraction coefficient) by  Peres  and Solomyak \cite{PeSo}. They showed that for a fixed compact set $K\subseteq \mathbb{R}$ and a family $\{C_\lambda\}$ of homogeneous Cantor sets parameterized by a contraction rate $\lambda$ (i.e. all  contractions have the form $\varphi_i(x)=\lambda x +D_i(\lambda)$, $D_i\in C^1$) one has
 \begin{multline}\label{e.hdsone}
     \text{dim}_H(C_\lambda+K)= \text{dim}_HC_\lambda +  \text{dim}_HK \ \text{for a.e.}\ \lambda\in (\lambda_0, \lambda_1) \\ \text{if} \ \ \ \text{dim}_HC_\lambda +  \text{dim}_HK<1\ \  \text{for all}\ \lambda\in (\lambda_0, \lambda_1), \ \text{and}
 \end{multline}
 \begin{multline}\label{e.hdlone}
     Leb(C_\lambda+K)>0 \ \text{for a.e.}\ \lambda\in (\lambda_0, \lambda_1) \\ \text{if} \ \ \ \text{dim}_HC_\lambda +  \text{dim}_HK>1\ \  \text{for all}\ \lambda\in (\lambda_0, \lambda_1).
 \end{multline}

 In the case when $K$ is a non-linear $C^{1+\varepsilon}$-dynamically defined Cantor set, the set of exceptional parameters in (\ref{e.hdlone}) in fact has zero Hausdorff dimension, see \cite[Theorem 1.4]{Shm}.

 For a more general case of sums of dynamically defined Cantor sets $C$ and $K$ on the first glance the mentioned above results by Moreira and Yoccoz \cite{MY} provide the complete answer. But in practice in many cases one has to deal with a finite parameter families of Cantor sets, or even with a specific fixed Cantor sets $C$ and $K$, and \cite{MY} does not provide specific genericity assumptions that could be verified in a particular given setting. Specific conditions that would allow to claim that
 \begin{equation}\label{e.hs}
 \text{dim}_H(C+K)=\min\left(\text{dim}_HC +  \text{dim}_HK, 1\right)
 \end{equation}
 are currently known \cite{HS,NPS, PeShm}, but the case $\text{dim}_HC +  \text{dim}_HK>1$ turned out to be more subtle.

 In this paper we address this question in the case of affine (all $\varphi_i$ in (\ref{e.ddcs}) are affine contractions, not necessarily with the same contraction coefficients) and close to affine dynamically defined Cantor sets.
 \begin{Theorem}\label{t.1}
Suppose $J\subseteq \mathbb{R}$ is an interval and $\lbrace C_\l \rbrace_{\lambda\in J}$ is  a family of dynamically defined Cantor sets generated by contracting maps \begin{equation}\label{e.start}\lbrace f_{i, \lambda}(x)=c_{i}(\lambda)x+b_{i}(\lambda)+g_{i}(x, \lambda)\rbrace_{i=1}^m\end{equation} such that the following holds:
\begin{equation}
c_i(\lambda), b_i(\lambda)\ \text{are $C^1$-functions of}\ \lambda;
\end{equation}
\begin{equation}\label{e.c}
\frac{d|c_i|}{d\lambda} \le -\delta<0\  \text{for all $\lambda\in J$ and some uniform $\delta>0$};
\end{equation}
\begin{equation}\label{e.g}
g_i(x,\lambda)\in C^2, \ \text{with small (based on $\{c_i(\lambda)\}, \{b_i(\lambda)\}$) $C^2$-norm}.
\end{equation}
Then for any compact $K\subset \mathbb{R}$ with
\begin{equation}\label{e.condition}
\dim_H(K)+\dim_H(C_{\lambda}) > 1\ \text{ for all }\ \l\in J,
\end{equation}
the sumset $K+C_\lambda$ has positive Lebesgue measure for a.e. $\l\in J$.
\end{Theorem}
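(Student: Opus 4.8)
The plan is to realise $K+C_\lambda$ as the support of a convolution of two measures and to prove that this convolution is absolutely continuous for almost every $\lambda$, following the transversality method introduced by Solomyak and developed by Peres and Solomyak. By continuity of $\lambda\mapsto\dim_H C_\lambda$ and (\ref{e.condition}) we may pass to a compact subinterval of $J$ and exhaust $J$ by countably many of them, so it is enough to treat a subinterval on which $\dim_H K+\dim_H C_\lambda>1$ with a uniform gap. Fix then $s<\dim_H K$ and $t$ with $s+t>1$ and $t<\dim_H C_\lambda$ for all $\lambda$ in the subinterval, and fix a constant probability vector $\mathbf p=(p_1,\dots,p_m)$ for which the Bernoulli measure $\eta_\lambda=(\pi_\lambda)_*\mathbb{P}_{\mathbf p}$ has $\dim_H\eta_\lambda\ge t$ throughout, where $\pi_\lambda\colon\{1,\dots,m\}^{\N}\to C_\lambda$ is the coding map of (\ref{e.start}). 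Let $\mu$ be a Frostman measure on $K$ of finite $s$-energy (Frostman's lemma) and put $\nu_\lambda=\mu*\eta_\lambda$, a probability measure with $\operatorname{supp}\nu_\lambda\subseteq K+C_\lambda$. Since a finite measure with square-integrable Fourier transform is absolutely continuous (with an $L^2$ density, whose support then has positive Lebesgue measure), it suffices to establish
\[
\int_J\int_{\R}\big|\widehat{\nu_\lambda}(\xi)\big|^2\,d\xi\,d\lambda<\infty ,
\]
for this forces $\nu_\lambda\ll\mathrm{Leb}$ for a.e.\ $\lambda$.

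Because $\widehat{\nu_\lambda}=\widehat{\mu}\,\widehat{\eta_\lambda}$, Tonelli's theorem factors the left-hand side as $\int_{\R}|\widehat{\mu}(\xi)|^2\big(\int_J|\widehat{\eta_\lambda}(\xi)|^2\,d\lambda\big)\,d\xi$, and the argument then rests on two separate inputs. The second is soft: by the classical identity between weighted $L^2$-norms of $\widehat{\mu}$ and Riesz energies, $\int_{\R}|\widehat{\mu}(\xi)|^2|\xi|^{-t}\,d\xi$ is comparable to the $(1-t)$-energy $I_{1-t}(\mu)$, which is finite because $\mu$ has finite $s$-energy and $s>1-t$; this is the only place the dimension hypothesis $\dim_H K+\dim_H C_\lambda>1$ is used. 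Everything therefore reduces to an averaged Fourier-decay bound for the Cantor family alone, namely $\int_J|\widehat{\eta_\lambda}(\xi)|^2\,d\lambda\le C|\xi|^{-t}$ for $|\xi|\ge1$, and this is where the transversality of $\{C_\lambda\}$ must be exploited.

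Writing $|\widehat{\eta_\lambda}(\xi)|^2=\iint e^{-i\xi\,\pwt(\lambda)}\,d\mathbb{P}_{\mathbf p}(\omega)\,d\mathbb{P}_{\mathbf p}(\tau)$ with $\pwt(\lambda)=\pi_\lambda(\omega)-\pi_\lambda(\tau)$ and integrating in $\lambda$ first, the task becomes to bound the oscillatory integrals $\int_J e^{-i\xi\,\pwt(\lambda)}\,d\lambda$ and to sum the bounds over pairs $(\omega,\tau)$. If $n=n(\omega,\tau)$ is the first index where $\omega,\tau$ disagree, factoring out the common prefix gives $\pwt(\lambda)=\big(\prod_{i<n}c_{\omega_i}(\lambda)\big)\,\psi_{\omega,\tau}(\lambda)$, where $\psi_{\omega,\tau}$ codes two sequences that differ already in their first symbol. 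The transversality statement I would establish is a \emph{uniform} lower bound $|\pwt'(\lambda)|\ge\kappa\prod_{i<n}|c_{\omega_i}(\lambda)|$ on $J$, with the derivative of constant sign, valid for all pairs $(\omega,\tau)$; van der Corput's lemma then converts it into $\big|\int_J e^{-i\xi\,\pwt(\lambda)}\,d\lambda\big|\lesssim\min\big(1,(|\xi|\prod_{i<n}|c_{\omega_i}|)^{-1}\big)$. Proving this derivative bound is the first main obstacle and is exactly where the structural hypotheses enter: for a purely affine family ($g_i\equiv0$) the monotonicity (\ref{e.c}), $\frac{d|c_i|}{d\lambda}\le-\delta<0$, forces the coded difference to vary with a derivative of definite sign and of the correct size, and the smallness of the $C^2$-norm of the nonlinear terms in (\ref{e.g}) is precisely what allows this affine transversality to be transported to the near-affine family by a perturbation argument; keeping $\kappa$ uniform over all $(\omega,\tau)$ and all $\lambda$ (using compactness of $J$ and $|c_i(\lambda)|$ bounded away from $0$ and $1$) is the technical core of this step.

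The second, subtler obstacle is to carry out the summation over $n$ so that the resulting exponent is large enough: one must show that the averaged decay rate produced by these per-scale estimates actually reaches $t$ arbitrarily close to $\dim_H C_\lambda$, so that the hypothesis needed is the sharp one, $\dim_H K+\dim_H C_\lambda>1$. This is delicate for affine systems with \emph{unequal} contraction ratios, since a naive term-by-term summation against $\mathbb{P}_{\mathbf p}\times\mathbb{P}_{\mathbf p}$ loses a gap (of the type measured by the difference between $-\log\sum_i p_i^2$ and the entropy $-\sum_i p_i\log p_i$, which vanishes only for uniform weights), and closing this gap is what forces one to combine the transversality with a careful multiscale bookkeeping tuned to the weight vector rather than a single geometric estimate. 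Once the averaged decay bound $\int_J|\widehat{\eta_\lambda}|^2\,d\lambda\le C|\xi|^{-t}$ is secured for $t$ as close to $\dim_H C_\lambda$ as needed, the two inputs combine to give the displayed finiteness, hence $\nu_\lambda\ll\mathrm{Leb}$ and $\mathrm{Leb}(K+C_\lambda)>0$ for a.e.\ $\lambda$ in the subinterval, and exhausting $J$ completes the argument.
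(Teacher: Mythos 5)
Your overall strategy---convolve a Frostman measure on $K$ with a self-similar measure on $C_\lambda$ and prove the convolution is absolutely continuous for a.e.\ $\lambda$ via a lower bound on $\frac{d}{d\lambda}\pwt(\lambda)$---matches the paper's, and you correctly identify the central mechanism: hypothesis (\ref{e.c}) forces the ``affine'' contributions to $\pwt'$ to share a sign, and the smallness in (\ref{e.g}) preserves this under perturbation (this is the decomposition $S_1+S_2+S_3$ in Section \ref{sec:main}). But you route the conclusion through the $L^2$-Fourier energy method of Solomyak and Peres--Solomyak, whereas the paper verifies the hypotheses of the sublevel-set criterion Proposition \ref{BlackBox} from \cite{DGS}, and this difference is not cosmetic: it is exactly where your argument breaks.

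The gap is your claimed \emph{uniform} bound $|\pwt'(\lambda)|\ge\kappa\prod_{i<n}|c_{\omega_i}(\lambda)|$ for \emph{all} pairs $\omega\neq\tau$. What the sign argument actually produces (see (\ref{e.estfinal})) is a main term of size $n\,C^{*}\bigl|\prod_{s=1}^{n}l^{(s)}\bigr|$ --- one same-sign contribution per letter of the common prefix, $n=|\omega\wedge\tau|$ --- competing against the term $S_3$, of size $\hat C\bigl|\prod_{s=1}^{n}l^{(s)}\bigr|$, which records the $\lambda$-dependence of the tail points $P_n,Q_n$. That term is $O(1)$ relative to the product, not small, and nothing in (\ref{e.c}) or (\ref{e.g}) constrains the $b_i'$: for a pair with $\omega_0\neq\tau_0$ (so the product is empty and your claim reads $|\pwt'|\ge\kappa$), e.g.\ the fixed points of $f_{1,\lambda}$ and $f_{2,\lambda}$ drifting at equal speeds, $\pwt'$ can vanish. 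So transversality is only available for $|\omega\wedge\tau|\ge k_0$, which is precisely why condition (\ref{BlackBox2}) is formulated with such a cutoff. Your Fourier argument cannot tolerate this exemption: the pairs with $|\omega\wedge\tau|<k_0$ carry $\mathbb{P}_{\mathbf p}\times\mathbb{P}_{\mathbf p}$-mass at least $1-\sum_i p_i^2>0$, and without decay of $\int_J e^{-i\xi\pwt}\,d\lambda$ for them the bound $\int_J|\widehat{\eta_\lambda}(\xi)|^2\,d\lambda\lesssim|\xi|^{-t}$ fails for every $t>0$; even a nondegenerate stationary-phase estimate $|\xi|^{-1/2}$ (itself unjustified here) is insufficient whenever the required $t$ exceeds $1/2$. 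You would need either to prove transversality for short prefixes (not available from the stated hypotheses) or to switch to a formulation, like the sublevel-set one, that exempts them. Separately, your ``second obstacle'' --- summing the per-pair bounds to reach exponent $t$ arbitrarily close to $\dim_H C_\lambda$ with unequal contraction ratios --- is named but not carried out; it can be closed by taking $p_i=|c_i(\lambda_0)|^{\dim_H C_{\lambda_0}}$ and shrinking the parameter interval (the analogue of the paper's measure of maximal dimension together with Lemmas \ref{l.help1}--\ref{l.help2} and the Egorov argument), but as written it remains a promissory note.
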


\begin{Remark}
Theorem \ref{t.1} can be generalized in a straightforward way to a larger class of nearly affine Cantor sets where topological Markov chains are allowed instead of the full Bernoulli shift in the symbolic representation (see \cite{MY} or \cite{PaTa} for detailed definitions). We restrict ourselves to the case of the full shift only to keep the exposition more transparent.
\end{Remark}

\begin{Remark}
We strongly believe that the assumption on $C_\lambda$ being close to affine is an artefact of the proof, and that a similar statement should hold in a more general setting, for a family of non-linear dynamically defined Cantor sets without any smallness assumptions on non-linearity. We plan to address this question in a future publication.
\end{Remark}

Consider now the non-homogeneous affine case, that is a Cantor set generated by (\ref{e.ddcs}), where $\varphi_k(x)=\lambda_kx+d_k$. Moreover, let us include it into a family $\{K_{\Lambda}\}$, where
\begin{equation}
\Lambda=(\lambda_1, \ldots, \lambda_m), \ \lambda_k\in J_k\subset (-1, 0)\cup (0,1), \ \text{and} \ d_k=d_k(\Lambda)\ \text{is $C^1$}.
\end{equation}
The last condition in (\ref{e.ddcs}) implies that
$$
[d_i(\Lambda)+\l_i K_\Lambda]\cap [d_j(\Lambda)+\l_j K_\Lambda] = \emptyset\ \text{ for}\  i\neq j,
$$
which is sometimes called {\it strong separation condition}, e.g. see \cite{PeSo}.

%
%

Fubini's theorem together with Theorem \ref{t.1} gives the following statement.

\begin{Corollary}\label{c.1}
Suppose $K$ is a compact subset of the real line,  and a family $\{K_\Lambda\}$ of affine Cantor sets as above is given such that 
$$
\dim_H K + \dim_H K_\Lambda > 1 \ \text{ for all}\  (\l_1,\dots,\l_m)\in J_1\times\dots\times J_m.
$$
 Then for a.e. $(\l_1,\dots,\l_m)\in J_1\times\dots\times J_m$ the set $K+K_\Lambda$ has positive Lebesgue measure.
\end{Corollary}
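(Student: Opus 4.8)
The plan is to realize the $m$-parameter family $\{K_\Lambda\}$ as a union of one-parameter subfamilies, to each of which Theorem \ref{t.1} applies, and then to assemble the resulting null sets via Fubini's theorem. The only genuine point requiring care is the choice of direction in parameter space: since the contraction coefficients of $K_\Lambda$ are precisely $\lambda_1,\dots,\lambda_m$, hypothesis (\ref{e.c}) of Theorem \ref{t.1} forces us to move along a direction that decreases all $|\lambda_k|$ simultaneously. Varying a single coordinate will not do, because the remaining coefficients then stay constant and (\ref{e.c}) fails for them.

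First I would fix the direction. Each $J_k$ is an interval contained in $(-1,0)\cup(0,1)$, hence lies entirely in one of the two components, so $s_k:=\mathrm{sign}(\lambda_k)$ is constant on $J_k$. Set $v=(-s_1,\dots,-s_m)\in\{-1,+1\}^m$. Along any line $\Lambda(t)=\Lambda_0+tv$ one has $|\lambda_k(t)|=|\lambda_{k,0}|-t$ as long as $\Lambda(t)$ stays in the open box $J_1\times\cdots\times J_m$ (the sign $s_k$ being preserved), so every $|c_k|=|\lambda_k|$ is strictly decreasing at rate $1$ and (\ref{e.c}) holds with $\delta=1$. The coefficients $c_k(t)=\lambda_k(t)$ are affine, hence $C^1$, in $t$, the shifts $b_k(t)=d_k(\Lambda(t))$ are $C^1$ in $t$ by the assumed $C^1$-dependence of $d_k$, and the nonlinear terms vanish ($g_i\equiv0$), so (\ref{e.g}) is trivially satisfied. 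The maps are contractions since $|\lambda_k|<1$, the strong separation condition assumed above makes $K_{\Lambda(t)}$ a genuine dynamically defined Cantor set, and the dimension condition (\ref{e.condition}) holds along the line because it holds on the whole box. Hence Theorem \ref{t.1} applies to $\{K_{\Lambda(t)}\}$ on the interval $\{t:\Lambda(t)\in J_1\times\cdots\times J_m\}$, yielding $\mathrm{Leb}(K+K_{\Lambda(t)})>0$ for a.e.\ such $t$.

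Next I would run the Fubini argument. Let $B=\{\Lambda\in J_1\times\cdots\times J_m:\mathrm{Leb}(K+K_\Lambda)=0\}$. Writing $K_\Lambda=\bigcap_n I_n(\Lambda)$ with $I_n(\Lambda)$ a finite union of intervals whose endpoints depend continuously on $\Lambda$, one has $K+K_\Lambda=\bigcap_n\big(K+I_n(\Lambda)\big)$, a decreasing intersection of compacta, so $\mathrm{Leb}(K+K_\Lambda)=\lim_n\mathrm{Leb}\big(K+I_n(\Lambda)\big)$ is a pointwise limit of measurable functions of $\Lambda$; thus $B$ is measurable. Introduce orthonormal coordinates adapted to $v$, writing $\mathbb{R}^m=\mathbb{R}\hat v\oplus v^\perp$ with $\hat v=v/|v|$; the change of coordinates is a rotation and preserves $\mathrm{Leb}_m$. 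By the previous paragraph, for every $p\in v^\perp$ the section $\{t:p+t\hat v\in B\}$ has one-dimensional measure zero. Fubini's theorem then gives
$$
\mathrm{Leb}_m(B)=\int_{v^\perp}\mathrm{Leb}_1\big(\{t:p+t\hat v\in B\}\big)\,dp=0,
$$
which is exactly the assertion of the corollary.

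The main, and essentially only, obstacle is the first step: recognizing that Theorem \ref{t.1} cannot be applied coordinatewise and that the correct foliation is by lines in the diagonal direction $v=-(\mathrm{sign}\,\lambda_1,\dots,\mathrm{sign}\,\lambda_m)$, which is legitimate precisely because the signs are constant on each $J_k$. Once the family is sliced this way, verifying the hypotheses of Theorem \ref{t.1} is routine and the measurability of $B$ is a standard approximation argument, so the conclusion follows at once from Fubini.
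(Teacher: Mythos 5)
Your proposal is correct and follows exactly the route the paper takes (the paper's entire proof is the one line ``Fubini's theorem together with Theorem \ref{t.1} gives the following statement''). You have in fact supplied the one nontrivial detail the paper leaves implicit, namely that condition (\ref{e.c}) forces the slicing to be along a direction in which every $|\lambda_k|$ decreases, so the coordinatewise foliation fails and the diagonal direction $v=-(\mathrm{sign}\,\lambda_1,\dots,\mathrm{sign}\,\lambda_m)$ is the right choice.
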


Notice that in this setting any affine Cantor set is completely determined by $2m$ parameters, namely $(\lambda_1, \ldots, \lambda_m)\in \Lambda$ and $(d_1, \ldots, d_m)$. Admissible $2m$-tuples of the parameters (i.e. such that $\varphi_k([0,1])\subseteq [0,1]$ for each $k=1, 2, \ldots, m$, and the strong separation condition holds) form a subset in $\mathbb{R}^{2m}$. As an immediate consequence of Corollary  \ref{c.1} we have

\begin{Corollary}\label{c.2}
Generically (for Lebesgue almost all admissible tuples of the parameters) the sum of two affine Cantor sets has positive Lebesgue measure provided the sum of their Hausdorff dimensions is greater than one.
\end{Corollary}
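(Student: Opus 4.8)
The plan is to deduce this from Corollary \ref{c.1} by an application of Fubini's theorem, slicing the parameter space along the contraction rates of one of the two Cantor sets. Denote the two affine Cantor sets by $K^{(1)}$ and $K^{(2)}$, determined respectively by contraction rates $\mu=(\mu_1,\dots,\mu_n)$ together with translations $e=(e_1,\dots,e_n)$, and by contraction rates $\Lambda=(\lambda_1,\dots,\lambda_m)$ together with translations $d=(d_1,\dots,d_m)$. The admissible tuples $(\mu,e,\Lambda,d)$ for which $\varphi_k([0,1])\subseteq[0,1]$ and the strong separation condition holds for each set, and for which $\dim_H K^{(1)}+\dim_H K^{(2)}>1$, form an open subset $\mathcal{A}$ of Euclidean space. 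Here I use that for an affine Cantor set satisfying the strong separation condition the Hausdorff dimension is the root $s$ of Moran's equation $\sum_k|\lambda_k|^s=1$, and hence depends continuously (indeed smoothly, by the implicit function theorem, since $\sum_k|\lambda_k|^s\ln|\lambda_k|\neq0$) on the contraction rates alone. I would set $B=\{(\mu,e,\Lambda,d)\in\mathcal{A}:\mathrm{Leb}(K^{(1)}+K^{(2)})=0\}$ and aim to show $\mathrm{Leb}(B)=0$.

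First I would record that $B$ is measurable. The Cantor sets $K^{(1)}$ and $K^{(2)}$, and therefore the sumset $K^{(1)}+K^{(2)}$, vary continuously in the Hausdorff metric as functions of the parameters, and Lebesgue measure is upper semicontinuous with respect to Hausdorff convergence of compact sets; thus $(\mu,e,\Lambda,d)\mapsto\mathrm{Leb}(K^{(1)}+K^{(2)})$ is upper semicontinuous, hence Borel, so $B$ is a Borel set.

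Next I would fix all coordinates except the contraction rates $\Lambda$ of the second set, i.e. fix $(\mu,e,d)$, and examine the corresponding slice $B_{(\mu,e,d)}\subseteq\mathbb{R}^m$. For this fixed data $K^{(1)}$ is a fixed compact set, the translations $d$ are fixed constants (trivially $C^1$ in $\Lambda$), so the second set becomes a family $K_\Lambda$ as in Corollary \ref{c.1}, and the admissible slice $U=\{\Lambda:(\mu,e,\Lambda,d)\in\mathcal{A}\}$ is open. Writing $U$ as a countable union of closed boxes $J_1\times\dots\times J_m\subseteq U$ (e.g. dyadic), on each such box the dimension condition $\dim_H K^{(1)}+\dim_H K_\Lambda>1$ and the strong separation condition hold throughout, and each $J_k$ lies in $(-1,0)$ or $(0,1)$ since $U$ avoids the hyperplanes $\lambda_k=0$. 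Hence Corollary \ref{c.1} applies on each box and gives $\mathrm{Leb}(K^{(1)}+K_\Lambda)>0$ for a.e. $\Lambda$ in that box. Taking the countable union over the boxes yields $\mathrm{Leb}_m(B_{(\mu,e,d)})=0$ for every fixed $(\mu,e,d)$.

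Finally, since $B$ is measurable and all of its slices in the $\Lambda$-direction are null, Fubini's theorem gives $\mathrm{Leb}(B)=0$, which is the assertion. The only genuinely nontrivial input is Corollary \ref{c.1} itself (ultimately Theorem \ref{t.1}); the remaining work — openness of the admissible region via Moran's equation, measurability of $B$ via upper semicontinuity, and the exhaustion of the slice by boxes — is routine. The one point requiring care is that the slicing must be performed along the contraction rates rather than the translations, since these are the parameters governed by the monotonicity hypothesis (\ref{e.c}) underlying Theorem \ref{t.1} and Corollary \ref{c.1}.
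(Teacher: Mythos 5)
Your proposal is correct and follows the paper's own route: the paper obtains Corollary \ref{c.2} directly from Corollary \ref{c.1} by exactly this Fubini argument over the remaining parameters (slicing along the contraction rates, which are the parameters subject to the monotonicity hypothesis), though it records none of the supporting details—measurability of the exceptional set, smooth dependence of the dimension via Moran's equation, exhaustion of the slice by boxes—that you supply. The only microscopic quibble is that the condition $\varphi_k([0,1])\subseteq[0,1]$ is closed rather than open, so $\mathcal{A}$ as you define it need not be open; but its boundary is Lebesgue-null, so passing to the interior costs nothing.
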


\begin{Remark}
It is interesting to compare these results with Theorem E from \cite{ShmS} that claims that for any two affine Cantor sets $C_1$ and $C_2$ with sum of dimensions greater than one, $\text{dim}_H\,\{u\in \mathbb{R}\ |\ Leb(C_1+uC_2)=0\}=0$.
\end{Remark}

The idea of proof of Theorem \ref{t.1} is to find some measures supported on $K$ and $C_\lambda$ whose convolution is absolutely continuous with respect to the Lebesgue measure. Since support of a convolution of two measures is the sum of their supports, this would  prove that $Leb(K+C_\lambda)>0$. In Section \ref{s.acc} we provide the statement of a result from \cite{DGS} on absolute continuity of convolutions of singular measures under certain conditions. Then in Section \ref{sec:main} we verify those conditions for some specific measures supported on $K$ and $C_\lambda$.

\section{Absolute continuity of convolutions}\label{s.acc}

Let $\Omega=\mathcal{A}^{\Z_+}$ with $|\mathcal{A}| = m \geq 2$ be the standard symbolic space, equipped with the product topology. Let $\mu$ be a Borel probability measure on $\Omega$.

Let $J$ be a compact interval and assume we are given a family of continuous maps $\Pi_\l:\Omega\to\R$, for $\l \in J$, such that $C_\l = \Pi_\l(\Omega)$ are the Cantor sets, and let $\nu_\l = \Pi_\l (\mu)$.

For a word $u\in\mathcal{A}^{n}$, $n\geq 0$, denote by $|u| = n$ its length and by $[u]$ the cylinder set of elements of $\Omega$ that have $u$ as a prefix.  For $\omega,\tau\in\Omega$ we write $\omega\wedge\tau$ for the maximum common subword in the beginning of $\omega$ and $\tau$ (empty if $\omega_0\neq\tau_0$; we set the length of the empty word to be zero).  For $\omega,\tau\in \Omega$, let $\pwt(\l):=\Pi_\l(\omega)-\Pi_\l(\tau)$.

We will need the following statement.

\begin{Proposition}[Proposition 2.3 from \cite{DGS}]\label{BlackBox}
Let $\eta$ be a compactly supported Borel probability measure on $\R$ of exact local dimension $d_\eta$.  Suppose that for any $\varepsilon > 0$ there exists a subset $\Omega_\varepsilon \subset \Omega$ such that $\mu(\Omega_\varepsilon) > 1 -\varepsilon$ and the following holds;  there exist constants $C_1,C_2,C_3,\alpha,\beta,\gamma>0$ and $k_0\in\Z_+$ such that
\begin{equation}\label{BlackBox0}
d_\eta+\dfrac{\gamma}{\beta} > 1  \text{ and } d_\eta > \dfrac{\beta -\gamma}{\alpha},
\end{equation}
\begin{equation}\label{BlackBox1}
\max_{\l\in J} |\pwt(\l)| \leq C_1 m^{-\alpha|\omega\wedge\tau|} \text{ for all }\omega,\tau\in\Omega_\varepsilon, \omega\neq\tau,
\end{equation}
\begin{equation}\label{BlackBox2}
\sup_{v\in\R} Leb(\lbrace \l\in J: |v+\pwt(\l)|\leq r\rbrace )\leq C_2 m^{|\omega\wedge\tau|\beta}r
\end{equation}
for all $\omega,\tau\in\Omega_\varepsilon, \omega\neq\tau$ such that $|\omega\wedge\tau| \geq k_0$, and
\begin{equation}\label{BlackBox3}
\max_{u\in \mathcal{A}^{n}, [u]\cap \Omega_\varepsilon \neq \emptyset} \mu([u])\leq C_3m^{-\gamma n} \text{ for all } n\geq 1.
\end{equation}
Then the convolution $\eta\ast\nu_\l$ is absolutely continuous with respect to the Lebesgue measure for a.e. $\l\in J$.
\end{Proposition}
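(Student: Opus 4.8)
The plan is to establish absolute continuity through the lower-density criterion: a finite Borel measure $\theta$ on $\R$ satisfies $\theta\ll Leb$ as soon as its lower density $\underline{D}(\theta,x):=\liminf_{r\to 0}\theta(B(x,r))/(2r)$ is finite for $\theta$-a.e.\ $x$ (a measure with a nontrivial singular part has infinite lower density on a set of positive mass carrying that part). Since \eqref{BlackBox1}--\eqref{BlackBox3} only constrain $\mu$ on $\Omega_\varepsilon$, and the exact-dimensionality of $\eta$ is only an almost-everywhere statement, I first pass to restricted data: replace $\mu$ by $\mu|_{\Omega_\varepsilon}$ and $\eta$ by $\eta|_{G}$, where $G$ is an Egorov set with $\eta(G)>1-\epsilon$ on which $\eta(B(x,\rho))\le \rho^{\,d_\eta-\epsilon}$ holds uniformly for $\rho\le\rho_0$. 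Writing $\nu_\l^\varepsilon=\Pi_\l(\mu|_{\Omega_\varepsilon})$ and $\theta_\l=\eta|_G\ast\nu_\l^\varepsilon$, it suffices to prove $\theta_\l\ll Leb$ for a.e.\ $\l$: for any $E$ one has $(\eta\ast\nu_\l)(E)\le \theta_\l(E)+2(\varepsilon+\epsilon)$ because the discarded pieces contribute total mass at most $2(\varepsilon+\epsilon)$, so along a sequence $\varepsilon,\epsilon\to0$ (intersecting the corresponding full-measure sets of $\l$) absolute continuity of $\theta_\l$ upgrades to $\eta\ast\nu_\l\ll Leb$ for a.e.\ $\l$.

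By Fatou's lemma, applied first in $x$ and then in $\l$,
\[
\int_J\!\int \underline{D}(\theta_\l,x)\,d\theta_\l(x)\,d\l
\;\le\;
\liminf_{r\to 0}\frac{1}{2r}\int_J (\theta_\l\times\theta_\l)\{(x,y):|x-y|\le r\}\,d\l,
\]
so it is enough to bound the right-hand integrand uniformly as $r\to 0$. Unfolding the convolution and the coding, $(\theta_\l\times\theta_\l)\{|x-y|\le r\}$ equals
\[
\int\!\int\!\int\!\int
\mathbf 1\!\left[\,|(s-t)+\pwt(\l)|\le r\,\right]
d\eta|_G(s)\,d\eta|_G(t)\,d\mu|_{\Omega_\varepsilon}(\omega)\,d\mu|_{\Omega_\varepsilon}(\tau),
\]
which I organize by the common-prefix length $n=|\omega\wedge\tau|$ (the diagonal $\omega=\tau$ is $\mu\times\mu$-null since $\mu$ is non-atomic). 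For a fixed pair I integrate in $\l$ first: by \eqref{BlackBox2} the inner $\l$-measure is at most $\min\{Leb(J),\,C_2 m^{n\beta}r\}$, while by \eqref{BlackBox1} the integrand vanishes unless $|s-t|\le r+C_1 m^{-\alpha n}=:\rho_n$. Integrating the surviving region against $\eta|_G\times\eta|_G$ and using the uniform local-dimension bound gives a factor $\lesssim \rho_n^{\,d_\eta-\epsilon}$, and \eqref{BlackBox3} yields $\sum_{|u|=n}\mu([u])^2\le C_3 m^{-\gamma n}$. Thus the scale-$r$ energy is dominated by
\[
\sum_{n\ge 0}\min\{Leb(J),\,C_2 m^{n\beta}r\}\,\frac{1}{2r}\,\rho_n^{\,d_\eta-\epsilon}\,C_3 m^{-\gamma n}.
\]

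It remains to show this series is bounded uniformly in $r$, which I do by splitting the range of $n$ at the thresholds $n\approx \tfrac{\log(1/r)}{\beta\log m}$ (where the transversality bound saturates at $Leb(J)$) and $n\approx \tfrac{\log(1/r)}{\alpha\log m}$ (where $\rho_n$ switches from order $m^{-\alpha n}$ to order $r$). In the near-diagonal regime of large $n$, where one uses $Leb(J)$ and $\rho_n\approx r$, the summation produces a factor of order $r^{\,d_\eta-\epsilon-1+\gamma/\beta}$, which stays bounded precisely because $d_\eta+\gamma/\beta>1$ (first inequality of \eqref{BlackBox0}). In the moderate regime, where the transversality factor $C_2 m^{n\beta}r$ is active and $\rho_n\approx C_1 m^{-\alpha n}$, the terms form a geometric series with ratio $m^{\beta-\gamma-\alpha(d_\eta-\epsilon)}$, which is summable precisely because $d_\eta>(\beta-\gamma)/\alpha$ (second inequality of \eqref{BlackBox0}). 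Choosing $\epsilon$ small enough that both strict inequalities persist, the energy is bounded uniformly in $r$; hence $\underline{D}(\theta_\l,\cdot)<\infty$ $\theta_\l$-a.e.\ for a.e.\ $\l$, giving $\theta_\l\ll Leb$ and, by the reduction above, $\eta\ast\nu_\l\ll Leb$ for a.e.\ $\l$.

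I expect the main obstacle to be exactly this near-diagonal analysis: for pairs with long common prefix neither transversality nor the mass of $\eta$ suffices on its own, and one must exploit \eqref{BlackBox1}, \eqref{BlackBox2} and the exact local dimension of $\eta$ simultaneously --- this is where both inequalities of \eqref{BlackBox0} are consumed, and it is the reason the hypotheses are phrased in terms of $\alpha,\beta,\gamma,d_\eta$ rather than single dimensions. A secondary, more technical point is that exact local dimension is only an a.e.\ statement, so one must first pass to the uniform estimate on the Egorov set $G$ and to $\Omega_\varepsilon$ and recombine only at the very end; and the finitely many shallow levels $n<k_0$, for which \eqref{BlackBox2} is not assumed, genuinely require a transversality input at those scales as well (otherwise each such term diverges like $r^{-1}$), which one supplies separately before closing the estimate.
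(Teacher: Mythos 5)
First, a remark on what ``the paper's own proof'' is here: the paper does not prove this proposition at all --- it is imported as a black box from \cite{DGS} (Proposition 2.3 there), and the Remark following it even isolates the only property of $\eta$ the cited proof uses, namely the uniform Frostman bound \eqref{e.exdim}. Your reconstruction follows exactly the strategy of that cited proof (the Peres--Solomyak scheme): the lower-density criterion for absolute continuity, Fatou in $x$ and then in $\lambda$, unfolding the convolution into a fourfold integral over $\Omega\times\Omega\times\R\times\R$, grouping pairs by $n=|\omega\wedge\tau|$, and for each $n$ playing off the transversality bound \eqref{BlackBox2} (integrated in $\lambda$ first) against the support restriction $|s-t|\le r+C_1m^{-\alpha n}$ coming from \eqref{BlackBox1}, the Frostman estimate for $\eta$, and the cylinder-mass bound \eqref{BlackBox3}, with the two inequalities of \eqref{BlackBox0} consumed in the two extreme regimes of $n$. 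The preliminary reductions (Egorov set for $\eta$, restriction of $\mu$ to $\Omega_\varepsilon$, recombination at the end along a sequence $\varepsilon,\epsilon\to 0$) are also the standard and correct way to handle the fact that the hypotheses are only ``up to $\varepsilon$''.

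There is, however, one genuine gap, and you put your finger on it yourself without closing it: the levels $n<k_0$. You say these ``require a transversality input at those scales as well \ldots which one supplies separately'' --- but the hypotheses supply no such input, and none can be extracted from \eqref{BlackBox1} or \eqref{BlackBox3}; as you observe, without it each shallow level contributes a term of order $r^{d_\eta-1}$, which diverges when $d_\eta<1$. The correct repair does not need transversality at shallow scales: decompose $\nu_\lambda^\varepsilon=\sum_{|u|=k_0}\Pi_\lambda\bigl(\mu|_{[u]\cap\Omega_\varepsilon}\bigr)$ into the finitely many pieces carried by level-$k_0$ cylinders; any two distinct points of the same cylinder satisfy $|\omega\wedge\tau|\ge k_0$, so your energy estimate applies verbatim to each piece, and a finite sum of absolutely continuous measures is absolutely continuous. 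A secondary, non-fatal issue is the bookkeeping in the three-regime split: with your cut points the near-diagonal sum over $n>\log(1/r)/(\alpha\log m)$ produces $r^{\,d_\eta-\epsilon-1+\gamma/\alpha}$, not $r^{\,d_\eta-\epsilon-1+\gamma/\beta}$, and the intermediate range $\log(1/r)/(\beta\log m)<n<\log(1/r)/(\alpha\log m)$ (where the $\lambda$-integral saturates at $Leb(J)$ but $\rho_n$ is still of order $m^{-\alpha n}$) is not explicitly covered; it is controlled by the second inequality of \eqref{BlackBox0}, and since $\alpha<\beta$ the stated hypotheses still suffice, but as written the accounting does not quite match the regimes you name. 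With the cylinder decomposition added and the split tidied, the argument is complete and coincides with the proof in \cite{DGS}.
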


\begin{Remark}
In fact, in Proposition \ref{BlackBox} the condition on exact dimensionality of the measure $\eta$ can be replaced by the following condition (and this is the only consequence of exact dimensionality of $\eta$ that was used in the proof of Proposition \ref{BlackBox} in \cite{DGS}):
 $\eta$ is a compactly supported Borel probability measure on the real line, such that
\begin{equation}\label{e.exdim}
\eta[B_r(x)] \leq Cr^{d_\eta}, \text{  for all }x\in\R \text{  and  } r>0.
\end{equation}
\end{Remark}

\section{Proofs}\label{sec:main}
Here we construct the measure $\eta$ supported on $K$ and a family of measures $\nu_\lambda$ with $supp\, \nu_\l=C_\l$ such that Proposition \ref{BlackBox} can be applied. Since absolute continuity of the convolution $\eta * \nu_\l$ implies that $Leb(C_\l+K)>0$, this will prove Theorem \ref{t.1}.

Let us start with construction of the measure $\eta$. The compact set $K\subset \mathbb{R}$ satisfies the condition (\ref{e.condition}), i.e. $\dim_H(K)+\dim_H(C_{\lambda}) > 1\ \text{ for all }\ \l\in J$. Take any constant $d\in (0, \dim_H(K))$ that is sufficiently close to $\dim_H(K)$ to guarantee that  $d+\dim_H(C_{\lambda}) > 1\ \text{ for all }\ \l\in J$.  By Frostman's Lemma (see, e.g., \cite[Theorem~8.8]{Mattila}), there exists a Borel measure $\eta$ supported on $K$ such that (\ref{e.exdim}) holds with $d_\eta=d$.

Let $\{C_\lambda\}_{\l\in J}$ be a family  of dynamically defined Cantor sets generated by contractions $f_{k, \lambda}:[0,1]\to [0,1]$, $k=1, \ldots, m$, given by (\ref{e.start}). Define the map $\xi: C_\lambda\to\mathbb{R}$ by
$$
\xi(x)=\log|f_{k, \lambda}'(f_{k, \lambda}^{-1}(x))|\ \ \text{if}\ \ x\in f_{k, \lambda}([0,1]).
$$
Due to \cite{Man}, there is an ergodic Borel probability measure $\mu_\l$ on $C_\l$ (namely, the equilibrium measure for the potential $(\text{dim}_H\,C_{\lambda})\xi(x)$)  that satisfies the condition $-h_{\mu_\l}/\mu_\l (\xi) = \dim_H (C_\l)$.  This is also a measure on $C_\l$ such that $\dim_H(\mu_\l) = \dim_H (C_\l)$ (i.e. {\it the measure of maximal dimension}).

Sometimes it is convenient to consider one expanding map $$\Phi_\lambda:\cup_{k=1}^m
f_{k, \lambda}([0,1])\mapsto [0,1], \ \text{where}\ \Phi_\lambda(x)=f^{-1}_{k,\lambda}(x)\ \text{ for}\  x\in f_{k, \lambda}([0,1]),$$  instead of the collection of contractions $\{f_{1, \lambda}, \ldots, f_{m,\lambda}\}$. Notice that $\Phi_\lambda(C_\lambda)=C_\lambda$, and the Lyapunov exponent of $\Phi_\lambda$ with respect to the invariant measure $\mu_\lambda$ is equal to $-\mu_\l (\xi)$. We will denote this Lyapunov exponent by $Lyap^u(\mu_\lambda)$. Since $\mu_\l$ is a measure of maximal dimension, we have
$$
\dim_H(\mu_\l) = \frac{h_{\mu_\l}(\Phi_\l)}{Lyap^u(\mu_\l)}=\dim_H(C_\l).
$$

 For each $\omega\in\Omega$ let $F_{\l}^n(\omega) = f_{\omega_0,\l}\circ\dots\circ f_{\omega_n,\l}(x)$, for a fixed $x\in [0,1]$.  Then we can define the map $\Pi_\lambda : \Omega \to \R$ given by
\begin{equation}
\Pi_\lambda (\omega) = \lim_{n\to\infty} F_{\l}^n(\omega),
\end{equation}
where in fact the limit does not depend on the initial point $x\in [0,1]$. For any $\l_1, \l_2\in J$ the map $h_{\l_1, \l_2}:C_{\l_2}\to C_{\l_1}$ defined by $h_{\l_1, \l_2}=\Pi_{\l_1}\circ \Pi^{-1}_{\l_2}$ is a homeomorphism. It is well known (e.g. see Section 19 in \cite{KH}) that this homeomorphism must be H\"older continuous. Moreover, due to \cite{PV} the following statement holds.

\begin{Lemma}\label{l.help1}
For any $\lambda_0\in J$ and any $\tau\in (0,1)$ there exists a neighborhood $V\subseteq J$, $\lambda_0\in V$, such that for any $\l\in V$ the conjugacy $h_{\l, \l_0}:C_{\l_0}\to C_{\l}$ as well as its inverse $h_{\l_0, \l}:C_{\l}\to C_{\l_0}$ are H\"older continuous with H\"older exponent $\tau$.
\end{Lemma}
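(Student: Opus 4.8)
The plan is to reduce the statement to a comparison, uniform over a neighborhood of $\lambda_0$, between distances inside the Cantor sets and products of contraction ratios along common symbolic prefixes. Write $h = h_{\l,\lambda_0} = \Pi_\l \circ \Pi_{\lambda_0}^{-1}$, and fix two points $x = \Pi_{\lambda_0}(\omega)$, $y = \Pi_{\lambda_0}(\sigma)$ with $\omega,\sigma \in \Omega$; let $n = |\omega \wedge \sigma|$ be the length of their common prefix, so that $h(x) = \Pi_\l(\omega)$ and $h(y) = \Pi_\l(\sigma)$. The first step I would carry out is a geometric comparability estimate: there is a constant $D \ge 1$, uniform over all words and over $\l$ in a fixed compact neighborhood $V_0$ of $\lambda_0$, such that
\begin{equation}
D^{-1}\prod_{j=0}^{n-1}|c_{\omega_j}(\l)| \le |\Pi_\l(\omega) - \Pi_\l(\sigma)| \le D\prod_{j=0}^{n-1}|c_{\omega_j}(\l)|,
\end{equation}
together with the analogous bound with $\l$ replaced by $\lambda_0$.

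To establish this comparability I would invoke bounded distortion for the composition $f_{\omega_0,\l}\circ\cdots\circ f_{\omega_{n-1},\l}$. Since each $f_{k,\l}$ is $C^2$ with $f_{k,\l}'(x)=c_k(\l)+\partial_x g_k(x,\l)$ uniformly bounded away from $0$, the usual telescoping argument — bounding $\sum_j\big|\log f_{k,\l}'(\cdot)-\log f_{k,\l}'(\cdot)\big|$ by the Lipschitz constant of $\log f_{k,\l}'$ times a convergent geometric series of cylinder lengths — shows that the derivative of the $n$-fold composition varies by at most a bounded factor over the level-$n$ cylinder, independently of $n$. The upper bound in the display is then the diameter of $\Pi_\l([\omega\wedge\sigma])$; for the lower bound I would use the strong separation of the pieces $f_{i,\l}([0,1])$: since $\omega_n\neq\sigma_n$, the points $\Pi_\l(\omega)$ and $\Pi_\l(\sigma)$ sit in sub-cylinders separated by a gap comparable to the parent cylinder's diameter.

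The main estimate is then an elementary computation with logarithms. Set $c_{\max} = \max_k\sup_{\l\in V_0}|c_k(\l)| < 1$ and $\delta(\l) = \max_k\big|\log|c_k(\l)| - \log|c_k(\lambda_0)|\big|$, which tends to $0$ as $\l\to\lambda_0$ by continuity of the $c_k$. From the comparability estimate,
\begin{equation}
\log\frac{|h(x)-h(y)|}{|x-y|^{\tau}} \le 2\log D + \sum_{j=0}^{n-1}\Big[(1-\tau)\log|c_{\omega_j}(\lambda_0)| + \delta_j\Big], \qquad \delta_j = \log\frac{|c_{\omega_j}(\l)|}{|c_{\omega_j}(\lambda_0)|}.
\end{equation}
Since $\log|c_{\omega_j}(\lambda_0)| \le \log c_{\max} < 0$ and $|\delta_j|\le\delta(\l)$, choosing $V\subseteq V_0$ small enough that $\delta(\l)\le \tfrac12(1-\tau)\,|\log c_{\max}|$ for all $\l\in V$ makes each summand at most $\tfrac12(1-\tau)\log c_{\max} < 0$. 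Hence the sum is negative and the whole expression is bounded above by $2\log D$, uniformly in $\omega,\sigma$ and $\l\in V$; this is exactly the $\tau$-Hölder bound for $h_{\l,\lambda_0}$. The inverse $h_{\lambda_0,\l}$ is handled by the same computation with the roles of $\l$ and $\lambda_0$ interchanged, which leads to the identical smallness requirement on $\delta(\l)$, so that a single neighborhood $V$ works for both.

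I expect the main obstacle to be the bounded distortion estimate and, above all, the \emph{uniformity} of the distortion constant $D$ and of $\delta(\l)$ over $V$; this is precisely where the near-affine hypothesis (small $C^2$-norm of $g_i$) and the uniform hyperbolicity of the family enter. The remaining logarithmic computation is routine, and the qualitative point it encodes is that at $\l=\lambda_0$ the conjugacy is the identity (exponent $1$), so the Hölder exponent degrades continuously and can be kept above any prescribed $\tau<1$ by shrinking $V$.
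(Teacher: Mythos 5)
The paper does not actually prove this lemma --- it is quoted from Palis--Viana \cite{PV} (with a nod to \cite{KH}) --- so your argument is the only proof on the table. Its overall strategy (compare $|x-y|$ and $|h(x)-h(y)|$ with products of contraction rates along the common prefix, then absorb the discrepancy between the rates at $\lambda$ and at $\lambda_0$ into the negative margin $(1-\tau)\log c_{\max}$) is the standard route and is sound in outline. The gap is in your central comparability display. For \emph{nearly} affine maps, bounded distortion does not give
\[
D^{-1}\prod_{j=0}^{n-1}|c_{\omega_j}(\lambda)|\;\le\;|\Pi_\lambda(\omega)-\Pi_\lambda(\sigma)|\;\le\;D\prod_{j=0}^{n-1}|c_{\omega_j}(\lambda)|
\]
with $D$ independent of $n$. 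The telescoping argument controls the ratio of the derivative of the $n$-fold composition at two points of the \emph{same} cylinder (because the sum of cylinder diameters is geometric), but the ratio of that derivative to the product of the \emph{linear parts} $c_{\omega_j}(\lambda)$ equals $\prod_{j}\bigl|1+\partial_x g_{\omega_j}(x_j,\lambda)/c_{\omega_j}(\lambda)\bigr|$, which can be as large as $(1+C\| g\|_{C^1})^{n}$ --- exponentially large in $n$ at a fixed rate determined by the (small but fixed) size of $g$. Feeding this into your logarithmic computation adds a constant $c_0\sim \| g\|_{C^1}/|c_{\min}|$ to \emph{each} summand, and this constant does not shrink as $\lambda\to\lambda_0$. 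Since the lemma must deliver \emph{every} exponent $\tau<1$ while $\| g\|_{C^2}$ is fixed in advance by the hypotheses of Theorem \ref{t.1}, the margin $(1-\tau)|\log c_{\max}|$ eventually drops below $c_0$ and your summands stop being negative: as written, the proof yields H\"older continuity only for $\tau$ bounded away from $1$, which is exactly not enough for the application in Lemma \ref{l.help2}. (In the exactly affine case $g\equiv 0$ your display is true and the proof is complete.)

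The repair is to run the identical computation with the product of the \emph{actual} multipliers $l^{(s)}(\lambda)=\frac{\partial f_{\omega_{s-1},\lambda}}{\partial x}(P_s(\lambda))$ along the orbit; the comparability of distances with $\bigl|\prod_s l^{(s)}\bigr|$ with an $n$-independent constant is legitimate (it is Lemma \ref{l.333simple}, plus the uniform gap from strong separation for the lower bound). The per-summand error then becomes $\sup_{s,\omega}\bigl|\log|l^{(s)}(\lambda)|-\log|l^{(s)}(\lambda_0)|\bigr|$, which \emph{does} tend to $0$ as $\lambda\to\lambda_0$, because $(x,\lambda)\mapsto \partial_x f_{k,\lambda}(x)$ is uniformly continuous and bounded away from zero, and $\Pi_\lambda(\omega)\to\Pi_{\lambda_0}(\omega)$ uniformly in $\omega$. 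With that substitution your choice of $V$ and the rest of the logarithmic computation close the argument for every $\tau\in(0,1)$.
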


Define the measure $\mu$ on $\Omega$ by $\mu:=\Pi_{\l_0}^{-1}(\mu_{\l_0})$, and set $$\nu_\lambda:=\Pi_{\l}(\mu)=\Pi_{\l}(\Pi^{-1}_{\l_0}(\mu_{\l_0}))=h_{\l, \l_0}(\mu_{\l_0})=h_{\l, \l_0}(\nu_{\l_0}).$$ If both $h_{\l, \l_0}$ and $h_{\l_0, \l}$ are H\"older continuous with H\"older exponent $\tau$ then
$$
\tau\dim_HC_{\l_0}=\tau\dim_H\nu_{\l_0}\le \dim_H\nu_\l\le \frac{1}{\tau}\dim_H\nu_{\l_0}=\frac{1}{\tau}\dim_HC_{\l_0}.
$$
Since in Lemma \ref{l.help1} the value of $\tau$ can be taken arbitrarily close to one, we get the following statement.

\begin{Lemma}\label{l.help2}
For any $\lambda_0\in J$  there exists a neighborhood $W\subseteq J$, $\lambda_0\in W$, such that for any $\l\in W$ we have
$$
d_\eta + \dim_H \nu_\l > 1.
$$
\end{Lemma}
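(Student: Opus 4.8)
The plan is to convert the two-sided Hölder bound on $\dim_H \nu_\l$ recorded immediately above the statement into the desired strict inequality, by exploiting the freedom to take the Hölder exponent $\tau$ in Lemma~\ref{l.help1} arbitrarily close to $1$.

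First I would recall the constraints already built into the construction. The exponent $d_\eta = d$ was chosen in the opening of this section so that $d + \dim_H C_\l > 1$ for \emph{all} $\l\in J$; in particular $d + \dim_H C_{\l_0} > 1$. Since $d < \dim_H K \le 1$, this forces $\dim_H C_{\l_0} > 1 - d > 0$, so the quantity
\[
\tau^{*} := \frac{1 - d}{\dim_H C_{\l_0}}
\]
is well defined and satisfies $\tau^{*} < 1$. I would then fix any $\tau \in (\tau^{*}, 1)$.

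Next I would apply Lemma~\ref{l.help1} to the given $\l_0$ with this chosen value of $\tau$, producing a neighborhood $V \subseteq J$ of $\l_0$ on which both $h_{\l, \l_0}$ and $h_{\l_0, \l}$ are Hölder with exponent $\tau$. Setting $W := V$, the lower estimate in the displayed chain of inequalities preceding the lemma gives, for every $\l \in W$,
\[
\dim_H \nu_\l \;\ge\; \tau\,\dim_H C_{\l_0}.
\]
Combining this with the choice $\tau > \tau^{*}$ and using $\dim_H C_{\l_0} > 0$ yields
\[
d_\eta + \dim_H \nu_\l \;\ge\; d + \tau\,\dim_H C_{\l_0} \;>\; d + \tau^{*}\dim_H C_{\l_0} \;=\; d + (1 - d) \;=\; 1,
\]
which is exactly the assertion of the lemma.

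I do not expect a substantive obstacle to remain: the genuine analytic content — that the conjugacies $h_{\l, \l_0}$ and $h_{\l_0, \l}$ can be made Hölder with exponent arbitrarily close to $1$ in a neighborhood of $\l_0$ — is precisely Lemma~\ref{l.help1}, imported from \cite{PV}. What is left is the elementary observation that the strict inequality $d + \dim_H C_{\l_0} > 1$ leaves a definite gap that can absorb the Hölder loss once $\tau$ is taken sufficiently close to $1$. The only point deserving a word of care is that $W$ depends on $\l_0$ (both through $\tau^{*}$ and through the application of Lemma~\ref{l.help1}), so the conclusion is genuinely local in $\l_0$ rather than uniform; this is harmless, since in the sequel one covers $J$ by finitely many such neighborhoods and argues on each separately.
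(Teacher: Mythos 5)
Your proposal is correct and follows essentially the same route as the paper: the paper derives the lemma from the two-sided Hölder estimate $\tau\dim_H C_{\l_0}\le \dim_H\nu_\l\le \frac{1}{\tau}\dim_H C_{\l_0}$ displayed just before the statement, together with the remark that $\tau$ in Lemma~\ref{l.help1} can be taken arbitrarily close to $1$. Your only addition is to make the implicit quantifier explicit by exhibiting the threshold $\tau^{*}=(1-d)/\dim_H C_{\l_0}<1$, which is a harmless (and slightly more careful) rendering of the same argument.
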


It is clear that in order to prove Theorem \ref{t.1} it is enough to prove that for each $\l_0\in J$ there exists a neighborhood $W, \l_0\in W$, such that the sum $K+C_{\l}$ has positive Lebesgue measure for a.e. $\l$ from $W$. For a given $\l_0\in J$ we can choose positive $\varepsilon,\alpha, \beta$, and $\gamma$ in such a way that
$$
d_\eta + \dim_H \nu_{\l_0} > 1+\varepsilon,
$$
$$\alpha < \frac{Lyap^u(\nu_{\l_0})}{\log m} < \beta,$$
$$\gamma < \frac{h_{\nu_{\l_0}}(\Phi_{\l_0})}{\log m}.$$
If $\alpha, \beta$ are sufficiently close to $\frac{Lyap^u(\nu_{\l_0})}{\log m}$, and $\gamma$ is sufficiently close to $\frac{h_{\nu_{\l_0}}(\Phi_{\l_0})}{\log m}$, then we also have
$$
d_\eta + \frac{\gamma}{\beta} > 1,
$$
which is one of the conditions (\ref{BlackBox0}) of Proposition \ref{BlackBox}, and also
$$\frac{\beta}{\alpha} < 1+\frac{\varepsilon}{2}.$$
Decreasing if needed the neighborhood $W$ given by Lemma \ref{l.help2} we can guarantee that for all $\l\in W$ the following property holds:
$$\frac{h_{\nu_\l}(\Phi_\l)}{Lyap^u(\nu_\l)} -\frac{\gamma}{\alpha} < \frac{\varepsilon}{2}.$$
Therefore
$$d_\eta > 1+\varepsilon - \dim_H(\nu_\l) > \frac{\beta}{\alpha} - \frac{\gamma}{\alpha}$$
for $\l\in W$, which implies another part of the condition (\ref{BlackBox0}) of Proposition \ref{BlackBox}, namely,
$$d_\eta > \frac{\beta - \gamma}{\alpha}.$$
Finally let us notice that if $W$ is small, then we have
$$\alpha < \frac{Lyap^u(\nu_\l)}{\log m} < \beta,$$
$$\gamma < \frac{h_{\nu_\l}(\Phi_\l)}{\log m}$$
for all $\l\in W$.

In order to verify the conditions (\ref{BlackBox1}), (\ref{BlackBox2}), and (\ref{BlackBox3}) of Proposition \ref{BlackBox}, we will try to mimic the proof of Theorem 3.7 from \cite{DGS}. We will show that for a given small $\varepsilon>0$ there are subsets $\Omega_1$ and $\Omega_2$ in $\Omega$ such that $\mu(\Omega_i)>1-\frac{\varepsilon}{2}$, $i=1,2$, and properties (\ref{BlackBox1}) and (\ref{BlackBox2}) hold for all $\omega,\tau\in\Omega_1$, and (\ref{BlackBox3}) holds for all $\omega,\tau\in\Omega_2$. This will imply that all these conditions hold for all  $\omega,\tau\in\Omega_{\varepsilon}=\Omega_1\cap\Omega_2$ with $\mu(\Omega_\varepsilon)>1-\varepsilon$, i.e. justify application of Proposition \ref{BlackBox}, and therefore prove Theorem \ref{t.1}.

 For $\omega\in \Omega$, $\omega=\omega_0\omega_1\ldots\omega_n\ldots$, set $p(\lambda)=\Pi_\lambda(\omega)$ and
 \begin{equation}\label{e.ls}
 l^{(s)}=\frac{df_{\omega_{s-1}, \lambda}}{dx}(\Phi^s_{\lambda}(p(\lambda))).
 \end{equation}
 We will also write $l^{(s)}(\lambda)$ or $l^{(s)}_\omega$ if we need to emphasize the dependence of $l^{(s)}$ on $\lambda$ or $\omega$. Notice that $\{l^{(s)}\}$ is a sequence of multipliers of the contractions along the orbit of point $p(\lambda)$ under the map $\Phi_\lambda$, and if Lyapunov exponent at $p(\lambda)$ exists then
 $$
 Lyap^u(p(\lambda))=-\lim_{n\to \infty} \frac{1}{n}\sum_{s=1}^n\log\left|l^{(s)}\right|.
 $$
 \begin{Lemma}\label{Egorov}
Given $\epsilon > 0$, there exists a set $\Omega_1\subset \Omega$ with $\mu(\Omega_1) > 1-\frac{\epsilon}{2}$ and $N\in\N$ such that
 \begin{equation}\label{e.alphabeta}\alpha\log m < -\frac{1}{n}\sum_{s=1}^{n} \log \left|l^{(s)}(\l)\right|<\beta\log m \end{equation}
for every $\l\in W$, $n\ge N$, and all $p\in \Pi_\l(\Omega_1)$.
\end{Lemma}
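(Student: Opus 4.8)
The plan is to recognize the quantity in \eqref{e.alphabeta} as a Birkhoff average and to apply the pointwise ergodic theorem together with Egorov's theorem, the only genuine work being to make the convergence uniform in the parameter $\lambda$. Define $G:\Omega\times W\to\R$ by $G(\omega,\lambda)=\log\bigl|f_{\omega_0,\lambda}'(\Pi_\lambda(\sigma\omega))\bigr|$, where $\sigma$ is the shift on $\Omega$. Since $\Phi_\lambda^{s}(p(\lambda))=\Pi_\lambda(\sigma^{s}\omega)$ and $\Phi_\lambda^{s-1}(p(\lambda))\in f_{\omega_{s-1},\lambda}([0,1])$, a direct computation gives $\log|l^{(s)}_\omega(\lambda)|=G(\sigma^{s-1}\omega,\lambda)$, so that
$$-\frac{1}{n}\sum_{s=1}^n\log|l^{(s)}_\omega(\lambda)|=-\frac{1}{n}\sum_{j=0}^{n-1}G(\sigma^{j}\omega,\lambda).$$
The map $\Pi_\lambda(\omega)$ is jointly continuous in $(\omega,\lambda)$, $\omega_0$ is locally constant, and $f_{k,\lambda}'(x)$ is jointly continuous and bounded away from $0$; hence $G$ is continuous and bounded on the compact space $\Omega\times W$, and therefore uniformly continuous. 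Since $\nu_\lambda=\Pi_\lambda(\mu)$, we have $\int_\Omega G(\cdot,\lambda)\,d\mu=\int_{C_\lambda}\xi\,d\nu_\lambda=-Lyap^u(\nu_\lambda)$, so the Birkhoff average above converges, for each fixed $\lambda$, to $Lyap^u(\nu_\lambda)$ for $\mu$-a.e.\ $\omega$, by the pointwise ergodic theorem applied to the single ergodic system $(\Omega,\sigma,\mu)$.

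Next I would fix the tolerance. Passing to a compact subneighborhood if necessary, the strict bounds $\alpha\log m< Lyap^u(\nu_\lambda)<\beta\log m$ established before the lemma hold on $W$ with a uniform margin: by continuity of $\lambda\mapsto Lyap^u(\nu_\lambda)=-\int G(\cdot,\lambda)\,d\mu$ and compactness of $W$, there is $\eta>0$ with $Lyap^u(\nu_\lambda)\in(\alpha\log m+\eta,\ \beta\log m-\eta)$ for all $\lambda\in W$. Using uniform continuity of $G$, choose $\rho>0$ so that $|\lambda-\lambda'|<\rho$ implies $\sup_\omega|G(\omega,\lambda)-G(\omega,\lambda')|<\eta/3$; this simultaneously controls the difference of the Birkhoff averages at $\lambda$ and $\lambda'$ (uniformly in $\omega$ and $n$) and the difference $|Lyap^u(\nu_\lambda)-Lyap^u(\nu_{\lambda'})|$ by $\eta/3$. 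Cover $W$ by finitely many $\rho$-balls centered at $\lambda_1,\dots,\lambda_M$. For each $i$, the ergodic theorem gives $\mu$-a.e.\ convergence of $-\frac1n\sum_{j=0}^{n-1}G(\sigma^j\omega,\lambda_i)$ to $Lyap^u(\nu_{\lambda_i})$, and Egorov's theorem produces a set $E_i\subset\Omega$ with $\mu(E_i)>1-\tfrac{\epsilon}{2M}$ and an integer $N_i$ on which this convergence is uniform within $\eta/3$ for $n\ge N_i$.

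Finally I would set $\Omega_1=\bigcap_{i=1}^M E_i$, so $\mu(\Omega_1)>1-\tfrac{\epsilon}{2}$, and $N=\max_i N_i$. For $\omega\in\Omega_1$, $n\ge N$, and arbitrary $\lambda\in W$, choosing $\lambda_i$ with $|\lambda-\lambda_i|<\rho$ and adding and subtracting the Birkhoff average at $\lambda_i$ and the value $Lyap^u(\nu_{\lambda_i})$ gives, by the three estimates above,
$$\Bigl|-\tfrac1n\sum_{s=1}^n\log|l^{(s)}_\omega(\lambda)|-Lyap^u(\nu_\lambda)\Bigr|<\eta,$$
whence the quantity lies strictly between $\alpha\log m$ and $\beta\log m$. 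Since every $p\in\Pi_\lambda(\Omega_1)$ is of the form $p=\Pi_\lambda(\omega)$ with $\omega\in\Omega_1$ and the Birkhoff sum at $p$ coincides with the symbolic one, this is precisely \eqref{e.alphabeta}. The one delicate point is the uniformity in $\lambda$: both the maps $f_{k,\lambda}'$ and the evaluation points $\Pi_\lambda(\sigma^{s}\omega)$ move with $\lambda$, so the argument hinges on the joint continuity of $G$ on the compact set $\Omega\times W$, which reduces the continuum of parameters to the finite net $\{\lambda_i\}$ where Egorov's theorem can be applied.
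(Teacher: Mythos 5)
Your proposal is correct and follows essentially the same route as the paper: recognize the sum as a Birkhoff average, apply the pointwise ergodic theorem and Egorov's theorem at the points of a finite net in $W$, and use equicontinuity in $\lambda$ (your jointly continuous $G$ on $\Omega\times W$ is the paper's equicontinuous family $L_\omega(\lambda)$) to transfer the uniform bound from the net to all of $W$. The only cosmetic difference is that you obtain both inequalities at once by controlling the deviation from $Lyap^u(\nu_\lambda)$, whereas the paper proves the lower bound and declares the upper bound analogous.
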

\begin{proof}[Proof of Lemma \ref{Egorov}]
Let us start with the first part of the inequality (\ref{e.alphabeta}). First we will show that for a fixed $\l\in W$ and a given $\varepsilon'>0$, there exists $\Omega'$ with $\mu (\Omega') > 1-\varepsilon'$ and $N\in \N$ such that
$$\alpha\log m+\xi < -\frac{1}{n}\sum_{s=1}^{n} \log \left|l^{(s)}(\l)\right|,$$
 where $0 < \xi < Lyap^u(\mu_\l) - \alpha\log m$, for all $n\ge N$ and all $p\in \Pi_\l(\Omega')$.

By the Birkhoff Ergodic Theorem,
\begin{align*}
Lyap^u(\mu_\l) & =\\
& = \int \log \| D\Phi_\l (\Pi_\l(\omega)\|\,d\mu(\omega) \\
& = \lim_{n\to\infty} \frac{1}{n}\sum_{s=1}^n \log \| D\Phi_\l(\Phi_\l^s(\Pi_\l(\omega))\| \\
& = \lim_{n\to\infty}-\frac{1}{n}\sum_{s=1}^n \log \left|l_\omega^{(s)}(\l)\right|
\end{align*}
for $\mu$-a.e. $\omega\in\Omega$.  Thus by Egorov's theorem, there exists $\Omega' \subset \Omega$ with $\mu(\Omega') > 1-\varepsilon'$ such that the convergence is uniform on $\Omega'$.  Thus there exists $N\in \N$ such that $\alpha\log m+\xi < -\frac{1}{n}\sum_{s=1}^{n} \log \left|l^{(s)}(\l)\right|$ for all $n\ge N$ and all $p\in \Pi_\l(\Omega')$.

Next we will show that $N$ can be chosen uniformly in  $\l \in W$.  Let $\varepsilon >0$ be given.  Consider the family of functions
$$L_\omega (\l) = -\log \|D\Phi_\l(\Pi_\l(\omega)\|.$$
We can treat the elements of this family as functions of $\l$ with parameter $\omega$.  Then $\lbrace L_\omega (\l)\rbrace_{\omega\in\Omega}$ is an equicontinuous family of functions and there exists $t>0$ such that if $|\l_1 - \l_2 | \leq t$, then $|L_\omega(\l_1) - L_\omega(\l_2)| < \frac{\xi}{100}$ for any $\omega\in\Omega.$ Consider a finite $t$-net $\lbrace y_1,\dots,y_M\rbrace$ in $W$, containing $M=M(W,t)$ points.  For each point $y_j$ we can find a set $\Omega^{(j)}\subset \Omega$, $\mu(\Omega^{(j)}) > 1 - \frac{\epsilon}{4M}$, and $N_j\in \N$ such that for every $n \geq N_j$ and every $\omega\in \Omega^{(j)}$, we have
$$
\frac{1}{n}\sum_{s=1}^{n} L_{\sigma^s(\omega)}(y_j) = -\frac{1}{n}\sum_{s=1}^{n} \log \left|l_\omega^{(s)}(y_j)\right| > \alpha\log m +\xi.
$$
Take $\Omega_1 = \cap_{j=1}^M \Omega^{(j)}$.  We have
$$\mu (\Omega_1) > 1 - M\frac{\varepsilon}{4M} = 1-\frac{\varepsilon}{4},$$
and for every $\l\in W$ there exists $y_j$ with $|y_j-\l| \leq t$.  So for every $\omega\in\Omega_1\subseteq\Omega^{(j)}$ and every $n \ge N =\max \lbrace N_1,\dots,N_M\rbrace$, we have
\begin{align*}
-\frac{1}{n}\sum_{s=1}^{n} \log \left|l_\omega^{(s)}(\l)\right| & = \frac{1}{n}\sum_{s=1}^{n} L_{\sigma^s(\omega)}(\l)\\
& \geq \frac{1}{n}\sum_{s=1}^{n} L_{\sigma^s(\omega)}(y_j) - \left|\frac{1}{n}\sum_{s=1}^{n} L_{\sigma^s(\omega)}(y_j) - \frac{1}{n}\sum_{s=1}^{n} L_{\sigma^s(\omega)}(\l)\right| \\
& \geq \alpha\log m + \xi -\frac{\xi}{100} \\
& > \alpha\log m +\frac{\xi}{2} \\
& > \alpha\log m,
\end{align*}
which  proofs the first part of the inequality (\ref{e.alphabeta}). The proof of the second part is analogous. This concludes the proof of Lemma \ref{Egorov}.
\end{proof}
Notice that Lemma \ref{Egorov} directly implies that for $p\in \Pi_\l(\Omega_1)$ and $n\ge N$ we have
\begin{equation}\label{e.abproduct}  m^{-\beta n} < \left|\prod_{s=1}^n l^{(s)}\right| < m^{-\alpha n}.  \end{equation}
The next statement is a simple partial case of Lemma 3.12 from \cite{DGS}.
\begin{Lemma}\label{l.333simple}
There is a constant $C >0$ such that for any word $\omega_0\omega_1\ldots\omega_n\in\mathcal{A}^{n+1}$, any $\lambda\in J$, and any $x,y\in I_0=[0,1]$ the following holds. Set $$p=f_{\omega_0, \lambda}\circ f_{\omega_1, \lambda}\circ\ldots \circ f_{\omega_n, \lambda}(x),$$ and define $\{l^{(s)}\}$ by (\ref{e.ls}). Denote $$q=f_{\omega_0, \lambda}\circ f_{\omega_1, \lambda}\circ\ldots \circ f_{\omega_n, \lambda}(y).$$ Then
$$\frac{1}{C}\left|\prod_{s=1}^n l^{(s)}\right| \leq \frac{|p-q|}{|x-y|} \leq C\left|\prod_{s=1}^n l^{(s)}\right|. $$
\end{Lemma}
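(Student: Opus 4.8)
The plan is to recognize Lemma \ref{l.333simple} as a standard bounded distortion estimate and to prove it by the usual telescoping argument, the near-affine hypothesis (\ref{e.g}) serving only to supply good uniform constants. First I would record the uniform bounds that follow from the assumptions: since each $f_{i,\lambda}$ is a contraction of $[0,1]$ and $g_i$ has small $C^2$-norm, there are constants $0<\rho_0\le\rho<1$ and $M>0$, independent of $i$, of $x\in[0,1]$, and of $\lambda\in J$, with
\begin{equation*}
\rho_0\le |f_{i,\lambda}'(x)|\le\rho<1,\qquad |f_{i,\lambda}''(x)|\le M.
\end{equation*}
The lower bound $\rho_0>0$ comes from $c_i(\lambda)$ being bounded away from $0$ together with the smallness of $\partial_x g_i$, while $M$ bounds $\partial_{xx}g_i$; uniformity over $\lambda$ follows from the hypotheses (and, where needed, from restricting to a compact parameter interval).

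Next I would fix $\lambda$ and the word $\omega_0\dots\omega_n$ and abbreviate $G:=f_{\omega_0,\lambda}\circ\dots\circ f_{\omega_n,\lambda}$, so that $p=G(x)$ and $q=G(y)$. The core step is the distortion bound: for any $w,w'\in[0,1]$,
\begin{equation*}
\left|\log\left|\frac{G'(w)}{G'(w')}\right|\right|\le\sum_{j=0}^{n}\bigl|\log|f_{\omega_j,\lambda}'(w_j)|-\log|f_{\omega_j,\lambda}'(w_j')|\bigr|\le\frac{M}{\rho_0}\sum_{j=0}^{n}|w_j-w_j'|,
\end{equation*}
where $w_j$ (resp. $w_j'$) denotes the image of $w$ (resp. $w'$) under $f_{\omega_{j+1},\lambda}\circ\dots\circ f_{\omega_n,\lambda}$. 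Since a composition of $n-j$ contractions shrinks distances by at least the factor $\rho^{n-j}$, one has $|w_j-w_j'|\le\rho^{n-j}$, and summing the geometric series yields
\begin{equation*}
\frac{1}{C_0}\le\frac{|G'(w)|}{|G'(w')|}\le C_0,\qquad C_0:=\exp\!\left(\frac{M}{\rho_0(1-\rho)}\right),
\end{equation*}
a bound uniform in the word, in $n$, and in $\lambda$. This is the heart of the matter.

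Finally I would assemble the conclusion. For $x\neq y$ the mean value theorem gives $\frac{|p-q|}{|x-y|}=|G'(\zeta)|$ for some $\zeta\in[0,1]$, while the chain rule identifies the telescoping product, $\prod_{s=1}^{n}l^{(s)}=G'(x)/f_{\omega_n,\lambda}'(x)$ (indeed $\Phi_\lambda^s(p)=f_{\omega_s,\lambda}\circ\dots\circ f_{\omega_n,\lambda}(x)$ is exactly the point at which $f_{\omega_{s-1},\lambda}'$ is evaluated in the definition of $l^{(s)}$). Hence
\begin{equation*}
\frac{|p-q|/|x-y|}{\bigl|\prod_{s=1}^{n}l^{(s)}\bigr|}=\frac{|G'(\zeta)|}{|G'(x)|}\,|f_{\omega_n,\lambda}'(x)|\in\bigl[C_0^{-1}\rho_0,\;C_0\rho\bigr],
\end{equation*}
so the claim holds with $C:=C_0/\rho_0$. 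The only genuine obstacle is ensuring that $\rho_0$, $\rho$, $M$ — and therefore $C$ — are uniform over all $\lambda\in J$ and all word lengths; the near-affine assumption (\ref{e.g}) together with compactness in $\lambda$ reduces this to the elementary estimates above, which is precisely why this is the ``simple partial case'' of the general Lemma 3.12 of \cite{DGS}.
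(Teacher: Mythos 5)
Your proof is correct. The paper itself gives no argument for this lemma --- it is stated as ``a simple partial case of Lemma 3.12 from \cite{DGS}'' and the proof is deferred to that reference --- so there is nothing internal to compare against; your telescoping bounded-distortion argument, with the chain-rule identification $\prod_{s=1}^{n}l^{(s)}=G'(x)/f_{\omega_n,\lambda}'(x)$ and the mean value theorem supplying $|p-q|/|x-y|=|G'(\zeta)|$, is exactly the standard proof that the citation implicitly invokes, and your bookkeeping of the constants $\rho_0$, $\rho$, $M$ (uniform in $\lambda$ by compactness of $J$ and the smallness of the $C^2$-norms of the $g_i$, with $|c_i(\lambda)|$ bounded away from $0$ so that the $f_{i,\lambda}$ are invertible) is accurate.
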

The property (\ref{BlackBox1}) for all $\omega, \tau\in \Omega_1$ follows now from (\ref{e.abproduct}) and Lemma \ref{l.333simple}.

In order to check (\ref{BlackBox2}) for some $\omega, \tau\in \Omega$ it is enough to show that
\begin{equation}\label{e.ineq}
\left|\frac{d}{d\lambda}\phi_{\omega, \tau}(\lambda)\right|>C' m^{-\beta |\omega \wedge \tau|}
\end{equation}
 for some uniform (independent of $\omega, \tau\in \Omega$) constant $C'>0$.

Let us consider some $\omega, \tau\in \Omega$, and set $n=|\omega \wedge \tau|$. Let us denote
$$
P_0(\lambda)=\Pi_{\lambda}(\omega),  \ Q_0(\lambda)=\Pi_{\lambda}(\tau), \ \
$$
and
$$
\ P_s=\Phi^s_{\lambda}(P_0),  \ Q_s=\Phi_{\lambda}^s(Q_0),  \ s=0, \ldots, n.
$$
Notice that the distance between $P_n$ and $Q_n$ is uniformly bounded away from zero. Indeed, since $n=|\omega \wedge \tau|$, $P_n$ and $Q_n$ belong to different elements of Markov partition of $C_\lambda$. Let us also denote
$$
k^{(s)}_\lambda=f_{\omega_{s-1}} \ \ \ \text{and}\ \ \  l^{(s)}(\lambda)=\frac{\partial k^{(s)}_\lambda}{\partial x}(P_{s}(\lambda))\ \ \text{for}\ \ s=1, 2, \ldots, n.
$$
We have
\begin{equation}\label{e.ineqnext}
k^{(s)}_\lambda(x)=k^{(s)}_\lambda(P_s(\lambda))+l^{(s)}\cdot (x-P_s(\lambda)) + O((x-P_s(\lambda))^2)
\end{equation}
and
\begin{equation}\label{e.ineqnext}
\frac{\partial k^{(s)}_\lambda}{\partial x}(x)=l^{(s)}+  O(x-P_s(\lambda)).
\end{equation}
Notice that $P_0=k^{(1)}_\lambda\circ \ldots    \circ k^{(n)}_\lambda(P_n)$, and $Q_0=k^{(1)}_\lambda\circ \ldots    \circ k^{(n)}_\lambda(Q_n)$.

To prove (\ref{e.ineq}) we need to find a bound on
\begin{align*}\frac{d}{d\lambda}\phi_{\omega, \tau}(\lambda)=\frac{d}{d\l}(P_0(\l)-Q_0(\l))=
\end{align*}
\begin{align*}
  & = \sum_{i=1}^n \left(\prod_{s=1}^{i-1}\frac{\partial k^{(s)}_\l}{\partial x}(P_{s}(\l))\right)\frac{\partial k^{(i)}_\l}{\partial\l}(P_{i}(\l)) + \left(\prod_{s=1}^n \frac{\partial k^{(s)}_\l}{\partial x}(P_{s}(\l))\right)\frac{\partial P_n}{\partial\l}(\l) \\
  & - \sum_{i=1}^n \left(\prod_{s=1}^{i-1}\frac{\partial k^{(s)}_\l}{\partial x}(Q_{s}(\l))\right)\frac{\partial k^{(i)}_\l}{\partial\l}(Q_{i}(\l)) - \left(\prod_{s=1}^n \frac{\partial k^{(s)}_\l}{\partial x}(Q_{s}(\l))\right)\frac{\partial Q_n(\l)}{\partial\l} \\
 & = \sum_{i=1}^n \left(\prod_{s=1}^{i-1}\frac{\partial k^{(s)}_\l}{\partial x}(P_{s}(\l))\right)\left(\frac{\partial k^{(i)}_\l}{\partial\l}(P_{i}(\l))-\frac{\partial k^{(i)}_\l}{\partial\l}(Q_{i}(\l))\right) \\
 & + \sum_{i=1}^n \frac{\partial k^{(i)}_\l}{\partial \l}(Q_{i}(\l))\left(\prod_{s=1}^{i-1}\frac{\partial k^{(s)}_\l}{\partial x}(P_{s}(\l))-\prod_{s=1}^{i-1}\frac{\partial k^{(s)}_\l}{\partial x}(Q_{s}(\l))\right) \\
 & + \left(\left(\prod_{s=1}^n \frac{\partial k^{(s)}_\l}{\partial x}(P_{s}(\l))\right)\frac{\partial P_n}{\partial\l}(\l) - \left(\prod_{s=1}^n \frac{\partial k^{(s)}_\l}{\partial x}(Q_{s}(\l))\right)\frac{\partial Q_n(\l)}{\partial\l}\right)\\
 & = S_1 + S_2 + S_3
  \end{align*}

  Let us estimate $S_1$. We have
\begin{align*}S_1 = & \sum_{i=1}^n \left(\prod_{s=1}^{i-1}\frac{\partial k^{(s)}_\l}{\partial x}(P_{s}(\l))\right)\left(\frac{\partial k^{(i)}_\l}{\partial\l}(P_{i}(\l))-\frac{\partial k^{(i)}_\l}{\partial\l}(Q_{i}(\l))\right)\\
 & = \sum_{i=1}^n \left(\prod_{s=1}^{i-1} l^{(s)}\right)\frac{\partial^2 k^{(i)}_\l}{\partial x\partial\l}(W_{i}(\l))(P_{i}(\l) - Q_{i}(\l))
\end{align*}
where $W_{i}(\l)$ is a point between $P_{i}(\l)$ and $Q_{i}(\l)$.

Since we have
$$
\frac{\partial^2 k^{(i)}_\l}{\partial x\partial\l}=\frac{\partial c_{\omega_{i-1}}}{\partial\l}+\frac{\partial^2 g_{\omega_{i-1}}(x,\l)}{\partial x\partial\l},
$$
 the assumption (\ref{e.c}) implies that $\frac{\partial^2 k^{(i)}_\l}{\partial x\partial\l}$ has sign opposite to the sign of $l^{(i)}$.
Also, it is easy to see that $P_{i}(\l) - Q_{i}(\l)$ has the same sign as $$\left(\prod_{s=i+1}^n l^{(s)}\right)(P_{n}(\l) - Q_{n}(\l)).$$ Therefore all terms in the sum $S_1$ have the same sign as $$-\left(\prod_{s=1}^n l^{(s)}\right)(P_{n}(\l) - Q_{n}(\l)).$$ Using Lemma \ref{l.333simple}, assumption (\ref{e.c}), and the fact that $|P_{n}(\l) - Q_{n}(\l)|$ is bounded away from zero, this implies that
\begin{align}\label{e.last}
|S_1| =  &  \sum_{i=1}^n \left|\prod_{s=1}^{i-1} l^{(s)}\right|\left|\frac{\partial^2 k^{(i)}_\l}{\partial x\partial\l}(W_{i}(\l))\right|\left|P_{i}(\l) - Q_{i}(\l)\right|\ge
  nC^{*}\left|\prod_{s=1}^{n} l^{(s)}\right|
\end{align}
for some constant $C^*>0$.

Let us now estimate  $S_2$. Let us remind that  $k_{\l}^{(s)}(x) = f_{\omega_{s-1}, \l} (x)= c_{\omega_{s-1}}({\l})x+b_{\omega_{s-1}}{(\l)} + g_{\omega_{s-1}}{(x, \l)},$ where the $C^2$-norm of $g_{\omega_{s-1}}{(x,\l)}$ is small.
\begin{align*}
|S_2|
 &= \left|\sum_{i=1}^n \frac{\partial k^{(i)}_\l}{\partial \l}(Q_{i}(\l))\left(\prod_{s=1}^{i-1}\frac{\partial k^{(s)}_\l}{\partial x}(P_{s}(\l))-\prod_{s=1}^{i-1}\frac{\partial k^{(s)}_\l}{\partial x}(Q_{s}(\l))\right)\right| \\
& \leq \sum_{i=1}^n \left|\frac{\partial k^{(i)}_\l}{\partial \l}(Q_{i}(\l))\right|\cdot\left|\left(\prod_{s=1}^{i-1}\frac{\partial k^{(s)}_\l}{\partial x}(P_{s}(\l))-\prod_{s=1}^{i-1}\frac{\partial k^{(s)}_\l}{\partial x}(Q_{s}(\l))\right)\right| \\
& \leq \sum_{i=1}^n C\left|\left(\prod_{s=1}^{i-1}\frac{\partial k^{(s)}_\l}{\partial x}(P_{s}(\l))-\prod_{s=1}^{i-1}\frac{\partial k^{(s)}_\l}{\partial x}(Q_{s}(\l))\right)\right| \\
& = C \sum_{i=1}^n \left|\prod_{s=1}^{i-1} l^{(s)}\right| \left|1-\frac{\prod_{s=1}^{i-1}\frac{\partial k^{(s)}_\l}{\partial x}(Q_{s}(\l))}{\prod_{s=1}^{i-1}\frac{\partial k^{(s)}_\l}{\partial x}(P_{s}(\l))}\right|
\end{align*}
\begin{Lemma}\label{l.33}
$$\left|1-\frac{\prod_{s=1}^{i-1}\frac{\partial k^{(s)}_\l}{\partial x}(Q_{s}(\l))}{\prod_{s=1}^{i-1}\frac{\partial k^{(s)}_\l}{\partial x}(P_{s}(\l))}\right| \leq C^{\prime}\left|\prod_{s=i}^{n} l^{(s)}\right|\max_{t=1,\dots,m}\| g_t{(x,\l)}\|_{C^2} $$
for some $C^{\prime} > 0$.
\end{Lemma}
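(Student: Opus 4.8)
The plan is to write the ratio of products as a single product of one-step ratios and to show that each factor is exponentially close to $1$. Set $a_s=\frac{\partial k^{(s)}_\l}{\partial x}(P_s(\l))=l^{(s)}$ and $b_s=\frac{\partial k^{(s)}_\l}{\partial x}(Q_s(\l))$, so that the quantity to be estimated is $\left|1-\prod_{s=1}^{i-1}\frac{b_s}{a_s}\right|$. Writing $\rho_s=\frac{b_s}{a_s}-1$, the elementary bound $\left|\prod_{s=1}^{i-1}(1+\rho_s)-1\right|\le e^{\sum_s|\rho_s|}\sum_s|\rho_s|$ reduces the whole lemma to a good estimate on $\sum_{s=1}^{i-1}|\rho_s|$, provided we also check that this sum stays bounded (which is where the smallness of $\max_t\|g_t\|_{C^2}$ enters).

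First I would estimate a single $\rho_s$. Since $k^{(s)}_\l(x)=c_{\omega_{s-1}}(\l)x+b_{\omega_{s-1}}(\l)+g_{\omega_{s-1}}(x,\l)$, the linear coefficient $c_{\omega_{s-1}}$ is independent of $x$ and cancels in $b_s-a_s$, leaving $b_s-a_s=\frac{\partial g_{\omega_{s-1}}}{\partial x}(Q_s,\l)-\frac{\partial g_{\omega_{s-1}}}{\partial x}(P_s,\l)$. By the mean value theorem this is bounded by $\|g_{\omega_{s-1}}\|_{C^2}\,|P_s-Q_s|$. The key geometric input is that $P_s$ and $Q_s$ are the images of $P_n,Q_n$ under $k^{(s+1)}_\l\circ\cdots\circ k^{(n)}_\l$, so Lemma \ref{l.333simple} (bounded distortion) gives $|P_s-Q_s|\le C\left|\prod_{s'=s+1}^n l^{(s')}\right||P_n-Q_n|$, and since $|P_n-Q_n|\le 1$ we get $|P_s-Q_s|\le C'\left|\prod_{s'=s+1}^n l^{(s')}\right|$. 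Because the $c_i$ are bounded away from $0$ and $\max_t\|g_t\|_{C^2}$ is small, $|a_s|=|l^{(s)}|$ is bounded below by a uniform constant, so $|\rho_s|\le C''\,\max_t\|g_t\|_{C^2}\left|\prod_{s'=s+1}^n l^{(s')}\right|$.

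It then remains to sum. Factoring out the tail common to every term,
$$\sum_{s=1}^{i-1}\left|\prod_{s'=s+1}^n l^{(s')}\right|=\left|\prod_{s'=i}^n l^{(s')}\right|\sum_{s=1}^{i-1}\left|\prod_{s'=s+1}^{i-1} l^{(s')}\right|.$$
Using that each $|l^{(s')}|\le\theta<1$ for a uniform $\theta$, the inner sum is dominated by the geometric series $\sum_{j\ge 0}\theta^j=\frac{1}{1-\theta}$, a constant independent of $i,n,\omega,\tau$. Hence $\sum_{s=1}^{i-1}|\rho_s|\le \frac{C''}{1-\theta}\max_t\|g_t\|_{C^2}\left|\prod_{s'=i}^n l^{(s')}\right|$, which is both small (so the prefactor $e^{\sum_s|\rho_s|}$ is a harmless constant) and of exactly the required shape. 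Plugging back into the elementary product inequality yields the claimed bound, with $C'$ absorbing the bounded factor $e^{\sum_s|\rho_s|}$.

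The main obstacle is essentially bookkeeping: getting the indices in the bounded-distortion step right so that the product pulled out is precisely $\prod_{s'=i}^n l^{(s')}$ and the leftover is a convergent geometric sum. The one genuine analytic point is that the linear parts of the $f_{i,\l}$ cancel in $b_s-a_s$, which is what forces the factor $\max_t\|g_t\|_{C^2}$ to appear and makes the whole expression small; without the smallness of the nonlinearity one could not guarantee that $\sum_s|\rho_s|$ stays bounded and the product estimate would break down.
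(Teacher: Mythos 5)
Your proposal is correct and follows essentially the same route as the paper: the paper passes to the logarithm of the ratio of products and bounds the resulting sum of log-differences, which is just the multiplicative form of your one-step-ratio expansion, and both arguments rest on the same three ingredients --- cancellation of the linear part so only $\partial_x g$ survives, the mean value theorem giving the factor $\max_t\|g_t\|_{C^2}\,|P_s-Q_s|$, and Lemma \ref{l.333simple} plus a geometric summation producing the factor $\bigl|\prod_{s=i}^{n} l^{(s)}\bigr|$.
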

\begin{proof}[Proof of Lemma \ref{l.33}]
Note that if $A$ is near 1 and $B$ is much smaller than 1, we have that $$|\log A| < B \text{ implies }|A-1| \leq 2B.$$ Indeed,
\begin{align*}
|\log A| < B & \Rightarrow e^{-B}-1 < A-1 < e^B-1 \\
& \Rightarrow -B+O(B^2) < A-1 < B+O(B^2)\\
& \Rightarrow |A-1| < 2B
\end{align*}
for small $B$.

To prove Lemma \ref{l.33}, we will show that $\left| \log \frac{\prod_{s=1}^{i-1}\frac{\partial k^{(s)}_\l}{\partial x}(Q_{s}(\l))}{\prod_{s=1}^{i-1}\frac{\partial k^{(s)}_\l}{\partial x}(P_{s}(\l))}\right|$ is small.  By the mean value theorem and using Lemma \ref{l.333simple} we get
\begin{align*}
\left| \log \frac{\prod_{s=1}^{i-1}\frac{\partial k^{(s)}_\l}{\partial x}(Q_{s}(\l))}{\prod_{s=1}^{i-1}\frac{\partial k^{(s)}_\l}{\partial x}(P_{s}(\l))}\right|
& = \left|\sum_{s=1}^{i-1} \log \frac{\partial k^{(s)}_\l}{\partial x}(Q_{s}(\l)) - \sum_{s=1}^{i-1} \log \frac{\partial k^{(s)}_\l}{\partial x}(P_{s}(\l))\right| \\
& \leq C \sum_{s=1}^{i-1} \left|\frac{\partial k^{(s)}_\l}{\partial x}(Q_{s}(\l))-\frac{\partial k^{(s)}_\l}{\partial x}(P_{s}(\l))\right| \\
& = C\sum_{s=1}^{i-1} \left| \frac{\partial g_{\omega_{s-1}}}{\partial x}(Q_{s}(\l)) - \frac{\partial g_{\omega_{s-1}}}{\partial x}(P_{s}(\l))\right| \\
& = C\sum_{s=1}^{i-1} \left|\frac{\partial^2 g_{\omega_{s-1}}}{\partial x^2}(V_{s+1})\right|\left|Q_{s}(\l) - P_{s}(\l)\right| \\
& \leq \widetilde{C}\max_{t=1,\dots,m}\| g_t{(x,\l)}\|_{C^2} \sum_{s=1}^{i-1} \left|\prod_{j=s+1}^{n} l^{(s)}\right| \\
& \leq \widetilde{\widetilde{C}}\max_{t=1,\dots,m}\| g_t{(x,\l)}\|_{C^2} \left| \prod_{s=i}^n l^{(s)}\right|
\end{align*}
since the terms of the last sum are bounded by a geometrical progression.  This proves  Lemma \ref{l.33}.
\end{proof}
Therefore we have
\begin{equation}\label{e.eqnew1}
|S_2| \leq n{C^{\prime\prime}}\left|\prod_{s=1}^n l^{(s)}\right|\max_{t=1,\dots,m}\| g_t{(x,\l)}\|_{C^2}.
\end{equation}
Notice that Lemma \ref{l.33} implies also that for come constant $\hat{C}>0$ we have
\begin{equation}\label{e.eqnew2}
|S_3|\le \hat{C}\left|\prod_{s=1}^n l^{(s)}\right|.
\end{equation}

Now combining (\ref{e.last}), (\ref{e.eqnew1}), and (\ref{e.eqnew2}) we get

\begin{align*}
\left| \frac{d}{d\lambda}\phi_{\omega, \tau}(\lambda)\right|=|S_1+S_2+S_3|\ge \left(nC^{*}-n{C^{\prime\prime}}\max_{t=1,\dots,m}\| g_t{(x,\l)}\|_{C^2}-\hat{C}\right)\left|\prod_{s=1}^{n} l^{(s)}\right|.
\end{align*}
Therefore one can choose smallness of the $C^2$ norms of $\{g_i\}_{i=1, \ldots, m}$ in (\ref{e.g}) so that for some $\delta^*>0$ and all large enough values of $n\in \mathbb{N}$ we have
\begin{align}\label{e.estfinal}
\left| \frac{d}{d\lambda}\phi_{\omega, \tau}(\lambda)\right|\ge n\delta^*\left|\prod_{s=1}^{n} l^{(s)}\right|
\end{align}
for any $\omega, \tau\in \Omega$ with $|\omega\wedge\tau|=n$. In particular, if $\omega, \tau\in \Omega_1$ then (\ref{e.estfinal}) together with (\ref{e.abproduct}) imply that
$$
\left| \frac{d}{d\lambda}\phi_{\omega, \tau}(\lambda)\right|\ge n\delta^*m^{-\beta n}=n\delta^*m^{-\beta |\omega\wedge\tau|},
$$
which implies (\ref{e.ineq}) and hence verifies the assumption (\ref{BlackBox2}).

Finally, the Shannon-McMillan-Breiman Theorem implies that
$$-\frac{1}{n}\log \mu([\omega]_n)\to h_{\mu}(\sigma)$$
for $\mu$-a.e. $\omega \in \Omega$.  By Egorov's theorem, there exists a set $\Omega_2\subset \Omega$ with $\mu(\Omega_2) > 1-\varepsilon/2$  such that this convergence is uniform in $\omega\in \Omega_2$.  Thus we have
$$-\frac{1}{n}\log \mu([\omega]_n)\to h_{\mu}(\sigma) > \gamma\log m $$
uniformly for $\omega\in\Omega_2$.  So for $n$ sufficiently large, we have that
$$\mu([\omega]_n) < m^{-\gamma n}.$$
Hence if $C>0$ is sufficiently large then for all $n\geq 1$ we have
$$\mu([\omega]_n) < Cm^{-\gamma n}.$$

Now let $\Omega_\varepsilon = \Omega_1 \cap \Omega_2$, then $\mu(\Omega_\varepsilon) > 1-\varepsilon$ and all conditions of Proposition \ref{BlackBox} hold on $\Omega_\varepsilon$. This concludes the proof of Theorem \ref{t.1}.


\begin{thebibliography}{00}


\bibitem{BPS} B. B\'ar\'any, M. Pollicott, K. Simon, Stationary measures for projective transformations: the Blackwell and F\"urstenberg measures, \textit{J.\ Stat.\ Phys.}\ \textbf{148} (2012), 393--421.

\bibitem{BM} G.\ Brown,  W.\ Moran, Raikov systems and radicals in convolution measure algebras, {\it J. London Math. Soc. (2)} {\bf  28} (1983), no. 3, pp. 531--542.

\bibitem{BKMP} G.\ Brown, M.\ Keane, W.\ Moran, C.\ Pearce,  An inequality, with applications to Cantor measures and normal numbers, {\it Mathematika} {\bf 35} (1988), no. 1, pp. 87--94.








\bibitem{CF} T.\ Cusick, M.\ Flahive, The Markoff and Lagrange spectra,{\it Mathematical Surveys and Monographs}, {\bf  30}, American Mathematical Society, Providence, RI, 1989.


\bibitem{D05} D.\ Damanik, Dynamical upper bounds for one-dimensional quasicrystals, \textit{J.\ Math.\ Anal.\ Appl.}\ \textbf{303} (2005), 327--341.

\bibitem{DEG} D.\ Damanik, M.\ Embree, A.\ Gorodetski, Spectral properties of Schr\"odinger operators arising in the study of quasicrystals, chapter in \textit{Mathematics of Aperiodic Order} (editors Johannes Kellendonk, Daniel Lenz, Jean Savinien), series \textit{Progress in Mathematics, Birkh\"aeuser}, \textbf{309} (2015), 307--370. 


\bibitem{DG09} D.\ Damanik, A.\ Gorodetski, Hyperbolicity of the trace map for the weakly coupled Fibonacci Hamiltonian, \textit{Nonlinearity} \textbf{22} (2009), 123--143.

\bibitem{DG11} D.\ Damanik, A.\ Gorodetski, Spectral and quantum dynamical properties of the weakly coupled Fibonacci Hamiltonian, \textit{Commun.\ Math.\ Phys.}\ \textbf{305} (2011), 221–-277.


\bibitem{DG12} D.\ Damanik, A.\ Gorodetski, The density of states measure of the weakly coupled Fibonacci Hamiltonian, \textit{Geom.\ Funct.\ Anal.}\ \textbf{22} (2012), 976–-989.


\bibitem{DGS} D.\ Damanik, A.\ Gorodetski, B.\ Solomyak,  Absolutely Continuous Convolutions of Singular Measures and an Application to the Square Fibonacci Hamiltonian, {\it Duke Mathematical Journal}\ \textbf{164} (2015), 1603--1640.

\bibitem{DGY14} D.\ Damanik, A.\ Gorodetski, W.\ Yessen, The Fibonacci Hamiltonian, preprint (arXiv:1403.7823).









\bibitem{EL06} S.\ Even-Dar Mandel, R.\ Lifshitz, Electronic energy spectra and wave functions on the square Fibonacci tiling, \textit{Phil.\ Mag.}\ \textbf{86} (2006), 759--764.

\bibitem{EL07} S.\ Even-Dar Mandel, R.\ Lifshitz, Electronic energy spectra of square and cubic Fibonacci quasicrystals, \textit{Phil.\ Mag.}\ \textbf{88} (2008), 2261--2273.

\bibitem{EL08} S.\ Even-Dar Mandel, R.\ Lifshitz, Bloch-like electronic wave functions in two-dimensional quasicrystals, preprint (arXiv:0808.3659).


\bibitem{Gar} I.\ Garcia, A family of smooth Cantor sets, \textit{Ann.\ Acad.\ Sci.\ Fenn.\ Math.}\ \textbf{36} (2011), 21--45.

\bibitem{H} M.\ Hall, On the sum and product of continued fractions, {\it Ann. of Math. (2)}, {\bf 48} (1947), pp. 966--993.

\bibitem{HS} M.\ Hochman, P.\ Shmerkin, Local entropy averages and projections of fractal measures, \textit{Ann.\ of Math.}\ \textbf{175} (2012), 1001-–1059.

\bibitem{HMP} B.\ Honary, C.\ Moreira, M.\ Pourbarat, Stable intersections of affine Cantor sets, \textit{Bull. Braz. Math. Soc.} \textbf{36} (2005), 363--378.

\bibitem{ILML} R.\ Ilan, E.\ Liberty, S.\ Even-Dar Mandel, R.\ Lifshitz, Electrons and phonons on the square Fibonacci tilings, \textit{Ferroelectrics} \textbf{305} (2004), 15--19.


\bibitem{KH}  A.\ Katok, B.\ Hasselblatt, \textit{Introduction to the Modern Theory of Dynamical Systems}, Cambridge University Press, 1995.



\bibitem{L} R.\ Lifshitz, The square Fibonacci tiling, \textit{J. of Alloys and Compounds}, \textbf{342} (2002), 186--190.







\bibitem{Mal} A.\ Malyshev, Markov and Lagrange spectra (a survey of the literature), {\it Zap. Nauchn. Sem. Leningrad. Otdel. Mat. Inst. Steklov.} , {\bf 67} (1977) pp. 5--38 (in Russian).

\bibitem{Man} A.\ Manning, A relation between Lyapunov exponents, Hausdorff dimension and entropy, \textit{Ergodic Theory Dynam.\ Systems} \textbf{1} (1981), 451–-459.

\bibitem{Mattila} P.\ Mattila, {\em Geometry of Sets and Measures in Euclidean Spaces}, Cambridge University Press, Cambridge, 1995.

\bibitem{MM} H.\ McCluskey, A.\ Manning, Hausdorff dimension for horseshoes, \textit{Ergodic Theory Dynam.\ Systems} \textbf{3} (1983), 251-–260.

\bibitem{M} M.\ Mei, Spectra of discrete Schr\"odinger operators with primitive invertible substitution potentials, \textit{J. Math. Phys.} \textbf{55} (2014), no. 8, 082701, 22pp.

\bibitem{MO} P.\ Mendes, F.\ Oliveira, On the topological structure of the arithmetic sum of two Cantor sets, \textit{Nonlinearity} \textbf{7} (1994), 329--343.

\bibitem{Moreira} C.\ Moreira, Sums of regular Cantor sets, dynamics and applications to number theory, International Conference on Dimension and Dynamics (Miskolc, 1998), \textit{Period.\ Math.\ Hungar.}\ \textbf{37} (1998), 55-–63.

\bibitem{MY} C.\ Moreira, J.-C.\ Yoccoz,  Stable intersections of regular Cantor sets with large Hausdorff dimensions, \textit{Ann.\ of Math.}\ \textbf{154} (2001), 45--96.

\bibitem{NPS} F.\ Nazarov, Y.\ Peres, P.\ Shmerkin, Convolutions of Cantor measures without resonance, \textit{Israel J.\ Math.}\ \textbf{187} (2012), 93--116.

\bibitem{N} J.\ Neunh\"auserer, Properties of some overlapping self-similar and some self-affine measures, \textit{Acta Math.\ Hungar.}\ \textbf{92} (2001), 143--161.


\bibitem{n1} S.\ Newhouse, Non-density
of Axiom A(a) on $S^2$, {\it Proc. A.M.S. Symp. Pure Math.}, {\bf
14}, (1970), 191--202.

\bibitem{n2}
 S.\ Newhouse, Diffeomorphisms with infinitely many sinks. {\it
Topology} {\bf 13}, (1974), 9--18.

\bibitem{n3}
 S.\ Newhouse, The abundance of wild hyperbolic sets and nonsmooth
stable sets of diffeomorphisms, {\it Publ. Math. I.H.E.S.}, {\bf
50}, (1979), 101--151.

\bibitem{NW} S.-M.\ Ngai, Y. Wang, Self-similar measures associated to IFS with non-uniform contraction ratios, \textit{Asian J.\ Math.}\ \textbf{9} (2005), 227--244.

\bibitem{PaTa} J. Palis, F. Takens, {\em Hyperbolicity and Sensitive Chaotic Dynamics at Homoclinic Bifurcations}, Cambridge University Press, 1993.

\bibitem{PV}  Palis J., Viana M.,  On the continuity of Hausdorff
dimension and limit capacity for horseshoes.  {\it  Lecture Notes in
Math., 1331, Springer, Berlin,} 1988.

\bibitem{PeSch} Y.\ Peres, W.\ Schlag, Smoothness of projections, Bernoulli convolutions, and the dimension of exceptions, \textit{Duke Math.\ J.}\ \textbf{102} (2000), 193--251.

\bibitem{PeShm} Y.\ Peres, P.\ Shmerkin, Resonance between Cantor sets, \textit{Ergodic Theory Dynam.\ Systems} \textbf{29} (2009), 201-–221.

\bibitem{PeSo} Y.\ Peres, B.\ Solomyak, Self-similar measures and intersections of Cantor sets, \textit{Trans.\ Amer.\ Math.\ Soc.}\ \textbf{350} (1998), 4065--4087.

\bibitem{PeSo96} Y.\ Peres, B.\ Solomyak, Absolute continuity of Bernoulli convolutions, a simple proof, \textit{Math.\ Res.\ Lett.}\ \textbf{3} (1996), 231--239.

\bibitem{Po} M.\ Pollicott, Analyticity of dimensions for hyperbolic surface diffeomorphisms, \textit{Proceedings of the American Mathematical Society}\ \textbf{143} (2015), 3465--3474.

\bibitem{PoSi} M.\ Pollicott, K.\ Simon, The Hausdorff dimension of $\lambda$-expansions with deleted digits, \textit{Trans.\ Amer.\ Math. Soc.}\ \textbf{347} (1995), 967--983.




\bibitem{Sae} S.\ Saeki, On convolution squares of singular measures, \textit{Illinois J.\ Math.}\ \textbf{24} (1980), 225-–232.


\bibitem{Sa} A.\ Sannami, An example of a regular Cantor set whose difference set is a Cantor set with positive measure, {\it  Hokkaido Math. J.} {\bf 21} (1992), no. 1, 7--24.

\bibitem{Shm} P.\ Shmerkin,  On the Exceptional Set for Absolute Continuity of Bernoulli Convolutions, {\it Geometric and Functional Analysis}, {\bf 24} (2014),  946--958.

\bibitem{ShmS} P.\ Shmerkin, B.\ Solomyak, Absolute continuity of self-similar measures, their projections and convolutions, preprint (arXiv:1406.0204).



\bibitem{SS} K.\ Simon, B.\ Solomyak, Hausdorff dimension for horseshoes in $\R^3$, \textit{Ergodic Theory Dynam.\ Systems} \textbf{19} (1999), 1343--1363.

\bibitem{SSU} K.\ Simon, B.\ Solomyak, M.\ Urbanski, Invariant measures for parabolic IFS with overlaps and random continued fractions, \textit{Trans.\ Amer.\ Math.\ Soc.}\ \textbf{353} (2001), 5145--5164.





\bibitem{So95} B.\ Solomyak, On the random series $\sum\pm \lambda^n$ (an Erd\H{o}s problem), \textit{Ann.\ of Math.}\ \textbf{142} (1995), 611--625.

\bibitem{So97} B.\ Solomyak, On the measure of arithmetic sums of Cantor sets, \textit{Indag.\ Math.}\ (\textit{N.S.}) \textbf{8} (1997), 133--141.

\bibitem{So98} B.\ Solomyak, Measure and dimension for some fractal families, \textit{Math.\ Proc.\ Cambridge Philos.\ Soc.}\ \textbf{124} (1998), 531--546.




\bibitem{W} N.\ Wiener, A.\ Wintner, Fourier-Stieltjes transforms and singular infinite convolutions, \textit{Amer.\ J.\ Math.}\ \textbf{60} (1938), 513--522.

\bibitem{Y} W.\ Yessen, Hausdorff dimension of the spectrum of the square Fibonacci Hamiltonian, preprint (arXiv:1410.3102).



\end{thebibliography}
\end{document}